\newtheorem{thm}{Theorem}[section]
\newtheorem{lem}[thm]{Lemma}
\newtheorem{cor}[thm]{Corollary}
\newtheorem{dfn}[thm]{Definition}
\newtheorem{rem}[thm]{Remark}
\newtheorem{exa}[thm]{Example}
\newcommand{\scm}{\text{SCM}}
\newcommand{\scb}{\text{SCB}}
\newcommand{\gmo}{\gamma_\mathrm{sup,1}}
\newcommand{\gmtw}{\gamma_\mathrm{sup,2}}
\newcommand{\gmth}{\gamma_\mathrm{sup,3}}
\newcommand{\gmfo}{\gamma_\mathrm{sup,4}}
\newcommand{\gmfi}{\gamma_\mathrm{sup,5}}
\newcommand{\gmsi}{\gamma_\mathrm{sup,6}}
\newcommand{\gam}{\gamma}
\newcommand{\gm}{\gamma_\mathrm{sup}}
\newcommand{\gmk}{\gamma_{\mathrm{sup},k}}
\newcommand{\Ss}{{\mathcal{S}}}
\newcommand{\Pp}{{\mathcal{P}}}
\newcommand{\vr}{\varrho}
\newcommand{\noscm}{\nexists \mathrm{\  SCM}>0}
\newcommand{\noscb}{\nexists \mathrm{\  SCB}>0}
\newcommand{\isscm}{\exists \mathrm{\  SCM}>0}
\newcommand{\isscb}{\exists \mathrm{\  SCB}>0}
\begin{document}

\title{Exact optimal values of step-size coefficients for boundedness of linear
multistep methods}

\author{Lajos L\'oczi\thanks{{\texttt{lloczi@cs.elte.hu}}\newline
\indent \ This work was supported by the King Abdullah University of Science and Technology (KAUST), 4700 Thuwal, 23955-6900, Saudi Arabia. The author was also supported by the 
Department of Numerical Analysis, E\"otv\"os Lor\'and University (ELTE), and the 
Department of Differential Equations, Budapest University of Technology and Economics (BME), Hungary.}}

\date{\today}

\maketitle

\begin{abstract}
Linear multistep methods (LMMs) applied to approximate the
solution of initial value problems---typically arising from 
method-of-lines semidiscretizations of partial differential 
equations---are often required to have certain monotonicity or boundedness properties
(e.g.~strong-stability-preserving,  total-variation-diminishing or total-variation-boundedness properties).
These properties can be guaranteed by imposing step-size restrictions on the methods.
To qualitatively describe the step-size restrictions, one introduces the 
concept of step-size coefficient for monotonicity (SCM, 
also referred to as the
strong-stability-preserving (SSP) coefficient) or its generalization, 
the step-size coefficient for boundedness (SCB).  
A LMM with larger SCM or SCB is more efficient, and
the computation of the maximum SCM for a particular LMM is now straightforward.
However, it is more challenging to decide whether a positive SCB exists, or determine if a given 
positive number is a SCB.
Theorems involving sign conditions on certain 
linear recursions 
associated to the LMM have been proposed in the literature that allow us 
to answer the above questions:  
the difficulty with these theorems is that there are in general infinitely many sign conditions to be verified.
In this work we present methods to rigorously check the sign conditions.
As an illustration, we confirm some recent numerical investigations concerning the
existence of positive SCBs in the BDF and in the extrapolated BDF (EBDF) families.  
As a stronger result, we determine the optimal values of the SCBs 
as exact algebraic numbers in the BDF family (with $1\le k\le 6$ steps)
and in the Adams--Bashforth family (with $1\le k\le 3$ steps).

\smallskip

\noindent \textbf{Keywords:} linear multistep methods, strong stability preservation, step-size coefficient for monotonicity, step-size coefficient for boundedness.
\end{abstract}

\section{Introduction}

Let us consider an initial-value problem
\begin{equation}\label{ODE}
u'(t)=F(u(t))\text{ for } t\ge 0,\text{ with } u(0)=u_0,
\end{equation}
where $F:\mathbb{V}\to\mathbb{V}$ is a given function, 
$u_0\in \mathbb{V}$ is a given initial value in some vector space $\mathbb{V}$, and $u$ denotes the unknown function. In applications it is often
crucial for the numerical solution $u_n$ to satisfy certain monotonicity or boundedness 
properties. 
\begin{exa}
Many important partial differential equations have the property that they preserve
\begin{itemize}
\item[(\textit{i})] the interval containing the initial data;
\item[(\textit{ii})] or, as a special case, non-negativity of the initial data.
\end{itemize} 
For example, if one considers a scalar hyperbolic conservation law with initial condition 
$U(x,t_0)\in [U_{\min},U_{\max}]$ with some constants 
$U_{\min}\le U_{\max}$ for $x\in\mathbb{R}$, then it is known that the solution satisfies $U(x,t)\in [U_{\min},U_{\max}]$
for $x\in\mathbb{R}$ and $t\ge t_0$. To approximate the solution $U$ of this partial differential equation, one
often uses a method-of-lines semidiscretization in space, and obtains a system of ordinary differential equations \eqref{ODE}. For many semidiscretizations, the initial-value problem \eqref{ODE} also preserves
(\textit{i}) or (\textit{ii}). Finally, one typically uses a Runge--Kutta method or a linear multistep method to discretize \eqref{ODE}: in this setting it is natural to require that the time discretization $u_n$ should also preserve (\textit{i}) or (\textit{ii}).
\end{exa}
In situations when the numerical method is a linear multistep method (LMM) approximating the solution of \eqref{ODE}, the boundedness property can be expressed as
\begin{equation}\label{boundednessproperty}
\|u_n\|\le \mu\cdot \max_{0\le j\le k-1}\|u_j\|\quad\quad (n\ge k),
\end{equation}
where the constant $\mu\ge 1$ is independent of $n$, the starting vectors $u_j$ ($0\le j\le k-1$) and the problem \eqref{ODE}; $\mu$ is determined only by the LMM. The monotonicity property, or strong-stability-preserving (SSP) property, is 
recovered if \eqref{boundednessproperty} holds with $\mu=1$. Common choices for the seminorm $\|\cdot\|$ on $\mathbb{V}$ in 
applications include the supremum norm or 
the total variation seminorm. For LMMs, a more detailed exposition of the above topics 
 together with references can be found, for example, in \cite[Section 1]{spijker2013}.
For Runge--Kutta methods, analogous questions have been analyzed thoroughly 
and solved satisfactorily in \cite{hundsdorferspijker2011}. In what follows, we focus on LMMs.

In the literature a considerable amount of work has been done on developing conditions
that guarantee \eqref{boundednessproperty}. One possibility is to impose some 
restrictions on the step size $\Delta t$ of the LMM. These restrictions lead to the concepts
of step-size coefficient for monotonicity (SCM) and  step-size coefficient for boundedness (SCB)---see
Definitions \ref{scbdef} and \ref{scmdef} below.
Depending on the context, the SCM is also referred to as the strong-stability-preserving (SSP) coefficient.
The SCB is a generalization of the SCM: for many practically important LMMs, there is 
no positive SCM, while a positive SCB still exists. It is thus natural to ask whether
a positive step-size coefficient (SCM or SCB) exists for a particular LMM, or determine if a given positive number is a step-size coefficient. Since a LMM with larger step-size coefficient is more efficient,
one is also interested in the maximum value of the SCM or SCB.
Conditions that are easy to check and are necessary and sufficient for the existence of a positive SCM,
or for a given positive number to be a SCM have already been devised, see \cite[Section 1.1]{spijker2013}. 

However, even for a single LMM, it seems more difficult  
\begin{itemize}
\item[(\textit{i})] to decide whether a positive \scb\ exists;
\item[(\textit{ii})] to determine if a given positive number is a \scb;
\item[(\textit{iii})] to compute the maximum \scb.
\end{itemize} 
In the rest of the paper, we pursue these goals. 
The theoretical framework we use is presented in \cite{hundsdorferspijkermozartova2012,spijker2013}, while the computational techniques we apply show many similarities with those of \cite{locziketcheson2014}. All computations in this work have been performed by using \textit{Mathematica} 10.

The structure of our paper is as follows. In Section \ref{preliminariessection} we present
some definitions and notation. In Sections \ref{section12} and \ref{section13} we review
the main results of \cite{hundsdorferspijkermozartova2012} and \cite{spijker2013} concerning  (\textit{ii}) and (\textit{i}) above, respectively. Section \ref{section2} contains our theorems
for three families of multistep methods: 
\begin{itemize}
\item for the extrapolated BDF (EBDF) methods we answer (\textit{i});
\item for the BDF methods (as implicit methods) we answer (\textit{iii});
\item for the Adams--Bashforth (AB) methods (as explicit methods) we answer (\textit{iii}). 
\end{itemize}
The proofs are described in Section \ref{section3}.

\begin{rem}
In our proofs we essentially need to establish the non-negativity of certain (parametric) linear recursions.
 Recently, some general results have been
devised solving the problem of (ultimate) positivity in several classes of integer linear recursions, see, for example, the series of papers \cite{survey,soda,book1,book2}. 
\end{rem}

\subsection{Preliminaries and notation}\label{preliminariessection}

A LMM has the form
\begin{equation}\label{LMMform}
u_n=\sum_{j=1}^k a_j u_{n-j}+\Delta t\sum_{j=0}^k b_j F(u_{n-j}) \quad\quad (n\ge k),
\end{equation}
where $k\ge 1$, the step number of the LMM, is a fixed integer, and the coefficients $a_j, b_j\in\mathbb{R}$  determine the method. The step size of the method $\Delta t>0$ is assumed
to be fixed,
and we suppose that the starting values for the LMM, 
$u_0$ (appearing in \eqref{ODE}) and $u_j$ ($1\le j\le k-1$), are also given.
The quantity $u_n$ approximates the exact solution value $u(n\Delta t)$.
The generating polynomials associated with the LMM are denoted by
\begin{equation}\label{rhosigmadef}
\rho(\zeta):=\zeta^k-\sum_{j=1}^k a_j \zeta^{k-j}\quad\text{and}\quad
\sigma(\zeta):=\sum_{j=0}^k b_j \zeta^{k-j}.
\end{equation}
A non-constant univariate polynomial is said to satisfy the \textit{root condition}, if all of its roots
have absolute value $\le 1$, and any root with absolute value $=1$ has multiplicity one.
As in \cite{spijker2013}, the LMMs in this work are also required to satisfy the following basic assumptions.
\begin{equation}\label{LMMbasicassumptions}
\begin{aligned}
1.\quad  & \sum_{j=1}^k a_j=1\text{ and } \sum_{j=1}^k j a_j=\sum_{j=0}^k b_j  & \text{ (consistency).}\\
2.\quad &  \text{The polynomial } \rho \text{ satisfies the root condition} & \text{(zero-stability).}\\
3.\quad &  \text{The polynomials } \rho \text{ and } \sigma 
\text{ have no common root} & \text{(irreducibility).}\\
4.\quad &  b_0\ge 0. & 
\end{aligned}
\end{equation}
All well-known methods used in practice satisfy the four conditions in \eqref{LMMbasicassumptions}.

The \textit{stability region} of the LMM, denoted by $\Ss$, is defined as
\[
\Ss:=\{ \lambda\in\mathbb{C} : 1-\lambda b_0\ne 0\text{ and } \rho-\lambda\sigma
\text{ satisfies the root condition}\},
\]
see \cite[Section 2.1]{spijker2013}. 
The interior of the stability region will be denoted by $\mathrm{int}(\Ss)$.
\begin{rem}
Notice that the above definition of the stability region $\Ss$ is slightly more restrictive than the usual
one. The usual definition of the stability region (see, for example, in \cite{hairerwanner}),
\[{\widetilde{\mathcal{S}}}:=\{ \lambda\in\mathbb{C} : \rho-\lambda\sigma
\text{ satisfies the root condition}\},
\]
does \emph{not} exclude the case of a vanishing leading coefficient of the polynomial 
${\cal{P}}(\cdot,\lambda):=\rho(\cdot)-\lambda \sigma(\cdot)$. With this definition 
${\widetilde{\mathcal{S}}}$, one can construct simple examples with the following properties:
\begin{itemize}
\item the order of the recurrence relation generated by the LMM becomes $<k$ for certain values of the step size $\Delta t>0$, hence $k$ starting values of the LMM cannot be chosen arbitrarily;
\item there is an isolated point of the boundary of ${\widetilde{\mathcal{S}}}$ (being an element of ${\widetilde{\mathcal{S}}}$);
\item the boundary of ${\widetilde{\mathcal{S}}}$ is not a subset of the root locus curve due to these isolated boundary points.
\end{itemize}
Similarly, in the class of multiderivative multistep methods (being a generalization of LMMs), it seems advantageous to exclude the values of $\lambda\in\mathbb{C}$ from the definition of the stability region for which the leading coefficient of the corresponding polynomial ${\cal{P}}(\cdot,\lambda)$ vanishes.
\end{rem}
The set of natural numbers $\{0,1,\ldots\}$ is denoted by $\mathbb{N}$, while the 
complex conjugate of $z$ is $\bar{z}$. The \textit{dominant root} of a non-constant univariate polynomial is any root having the largest absolute value. 

When we define algebraic numbers in later sections, a polynomial 
\[\sum_{j=0}^n a_j x^j \text{ with } a_j\in\mathbb{Z}, a_n\ne 0 \text{ and } n\ge 3\] will be represented simply by its coefficient list 
\begin{equation}\label{coeffdef}
\{a_n, a_{n-1},\ldots, a_0\}.
\end{equation}

Now we recall the definition of the \textit{step-size coefficient for boundedness} and \textit{monotonicity}, respectively, corresponding to 
a given linear multistep method.
\begin{dfn}\label{scbdef}
Suppose that the method coefficients $a_j\in\mathbb{R}$ ($1\le j\le k$) and $b_j\in\mathbb{R}$ ($0\le j \le k$) satisfy \eqref{LMMbasicassumptions}. We say that $\gam>0$ is a step-size coefficient for boundedness (\scb) of the corresponding LMM, if $\exists\ \mu \ge 1$ such that
\begin{itemize}
\item for any vector space with seminorm $(\mathbb{V},\|\cdot\|)$, 
\item for any function $F:\mathbb{V}\to\mathbb{V}$ satisfying 
\[\exists\tau >0\ \  \forall v\in \mathbb{V} \ :\  \|v+\tau F(v)\|\le \|v\|,\]
\item for any $\Delta t\in (0,\gam\,\tau]$,
\item and for any starting vectors $u_j\in \mathbb{V}$ ($0\le j\le k-1$),
\end{itemize}
the sequence $u_n$ generated by \eqref{LMMform} has the property
$\|u_n\|\le \mu\cdot \max_{0\le j\le k-1}\|u_j\|$ for all $n\ge k$.
\end{dfn}
\begin{dfn}\label{scmdef}
We say that $\gam>0$ is a step-size coefficient for monotonicity (\scm) of the LMM,
if Definition \ref{scbdef} holds with $\mu=1$.
\end{dfn}
\smallskip
\noindent Given a LMM, the following abbreviations will be used throughout this work:
\begin{itemize}
\item $\isscm$ and $\noscm$ to indicate that there is a positive / there is no positive step-size coefficient for monotonicity, respectively;
\item $\isscb$ and $\noscb$ to indicate that there is a positive / there is no positive step-size coefficient for boundedness, respectively.
\end{itemize}
\noindent It is clear from Definitions \ref{scbdef}-\ref{scmdef} that for a given LMM
\[
\isscm \implies \isscb.
\]
If $\isscb$, then we define
\[
\gm:=\sup\{\gamma>0 : \gamma \text{ is a } \scb \}.
\]
When a family of $k$-step LMMs is given, sometimes we will use the symbol $\gmk$ instead.

\subsection{A necessary and sufficient condition for $\gamma>0$ to be a $\scb$}\label{section12}

Let us fix a particular LMM. For a given $\gam\in\mathbb{R}$, we define an auxiliary sequence $\mu_n(\gam)$ ($n\in\mathbb{Z}$) 
as in \cite[(2.10)]{spijker2013} by
\begin{equation}\label{mundef}
\mu_n(\gam) :=\left\{
\begin{aligned}
&   0 & \text{ for } & n<0,\\
&   b_n-\gam\,b_0\mu_n(\gam)+\sum_{j=1}^k (a_j -\gam\,b_j)\mu_{n-j}(\gam) & \text{for } & 0\le n \le k,\\
& -\gam\,b_0\mu_n(\gam)+\sum_{j=1}^k (a_j -\gam\,b_j)\mu_{n-j}(\gam) &
 \text{for } & n>k. 
\end{aligned}
\right.
\end{equation}
The following characterization appears in \cite[Theorem 2.2]{spijker2013}.
\begin{thm}\label{thm1.1}
Suppose the LMM satisfies \eqref{LMMbasicassumptions} and let $\gam>0$ be given. Then  $\gam$ is a $\scb$ if and only if 
\begin{equation}\label{necsufcond2}
-\gam\in \mathrm{int}(\Ss), \text{ and } \mu_n(\gam)\ge 0 \text{ for all } n\in\mathbb{N}^+.
\end{equation}
\end{thm}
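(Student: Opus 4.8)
Theorem~\ref{thm1.1} is quoted from \cite[Theorem~2.2]{spijker2013}; here is the route I would take to prove it. \emph{Shu--Osher reformulation and the role of $\mu_n(\gam)$.} I would first reduce the range $\Delta t\in(0,\gam\tau]$ to the single value $\Delta t=\gam\tau$: if $0<\Delta t\le\gam\tau$ and $\tilde\tau:=\Delta t/\gam\le\tau$, then $v+\tilde\tau F(v)=(1-\tilde\tau/\tau)\,v+(\tilde\tau/\tau)\,(v+\tau F(v))$ is a convex combination, so $\|v+\tilde\tau F(v)\|\le\|v\|$ and one may work with $\tilde\tau$ throughout. Putting $w_m:=u_m+\tilde\tau F(u_m)$, so that $\|w_m\|\le\|u_m\|$ and $\Delta t\,F(u_m)=\gam(w_m-u_m)$, the scheme \eqref{LMMform} turns into
\[
(1+\gam b_0)\,u_n-\sum_{j=1}^{k}(a_j-\gam b_j)\,u_{n-j}=\gam\sum_{j=0}^{k}b_j\,w_{n-j}\qquad(n\ge k).
\]
Comparing with \eqref{mundef} shows that $(\mu_n(\gam))_{n\ge0}$ is precisely the fundamental (Green's) solution of this recursion; passing to generating functions yields
\[
\sum_{n\ge0}\mu_n(\gam)\,z^n=\frac{\hat\sigma(z)}{\hat\rho(z)+\gam\hat\sigma(z)},
\]
where $\hat\rho(z):=z^k\rho(1/z)=1-\sum_{j\ge1}a_jz^j$ and $\hat\sigma(z):=z^k\sigma(1/z)=\sum_{j\ge0}b_jz^j$ are the reversed generating polynomials.

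\emph{What the two hypotheses mean.} The denominator $\hat\rho+\gam\hat\sigma=z^k(\rho+\gam\sigma)(1/z)$ has nonzero constant term $1+\gam b_0$, and its zeros are the reciprocals of those of $\rho+\gam\sigma$; using irreducibility in \eqref{LMMbasicassumptions} to rule out cancellation with $\hat\sigma$ on or outside the unit circle, one obtains
\[
-\gam\in\mathrm{int}(\Ss)\iff\text{every zero of }\rho+\gam\sigma\text{ lies in the open unit disc}\iff\sum_{n\ge0}|\mu_n(\gam)|<\infty,
\]
in which case $\mu_n(\gam)\to0$ geometrically. Separately, consistency in \eqref{LMMbasicassumptions} gives $\hat\rho(1)=1-\sum_ja_j=0$, hence $(\hat\rho+\gam\hat\sigma)(1)=\gam\sigma(1)\ne0$ (again by irreducibility), which forces the normalisation $\gam\sum_{n\ge0}\mu_n(\gam)=1$.

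\emph{Sufficiency.} Assume $-\gam\in\mathrm{int}(\Ss)$ and $\mu_n(\gam)\ge0$ for all $n\in\mathbb{N}^+$ (note $\mu_0(\gam)=b_0/(1+\gam b_0)\ge0$ automatically). Solving the recursion with its Green's function, one obtains for $n\ge k$ a representation
\[
u_n=\gam\sum_{m=k}^{n}\mu_{n-m}(\gam)\,w_m+\sum_{i=0}^{k-1}\bigl(\theta_{n,i}(\gam)\,u_i+\vartheta_{n,i}(\gam)\,w_i\bigr),
\]
with fixed scalar sequences $\theta_{n,i}(\gam),\vartheta_{n,i}(\gam)$ built from the Taylor coefficients of $1/(\hat\rho+\gam\hat\sigma)$, hence---by the first hypothesis---decaying geometrically and satisfying $\sup_n\sum_i(|\theta_{n,i}(\gam)|+|\vartheta_{n,i}(\gam)|)<\infty$. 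Isolating the term $m=n$ (its coefficient is $\gam\mu_0(\gam)=\gam b_0/(1+\gam b_0)<1$) and using $\|w_m\|\le\|u_m\|$, one proves by induction on $n$ that $\|u_n\|\le\mu\cdot\max_{0\le j\le k-1}\|u_j\|$ for a suitable $\mu\ge1$. This induction is the step I expect to be the \emph{main obstacle}. After dividing by $1-\gam\mu_0(\gam)=1/(1+\gam b_0)$, the factor multiplying $\max_{j<n}\|u_j\|$ equals $(1+\gam b_0)\gam\sum_{1\le\ell\le n-k}\mu_\ell(\gam)$, which is only $\le1$ and tends to $1$, so no crude estimate closes the recursion; it closes because, by the normalisation, this factor equals $1-(1+\gam b_0)\gam\sum_{\ell>n-k}\mu_\ell(\gam)$, and one must check that the positive geometric tail $\gam\sum_{\ell>n-k}\mu_\ell(\gam)$ dominates, up to a fixed constant, the start-up contribution $\sum_i(|\theta_{n,i}(\gam)|+|\vartheta_{n,i}(\gam)|)$. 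Here the two hypotheses combine: $\mu_n(\gam)\ge0$ forces the dominant zero of $\rho+\gam\sigma$ to be real, positive, and to enter $\mu_n(\gam)$ with positive weight, so this tail and the start-up terms decay at exactly the same rate, and $\mu:=\max\left\{1,\ \sup_n\dfrac{\sum_i(|\theta_{n,i}(\gam)|+|\vartheta_{n,i}(\gam)|)}{\gam\sum_{\ell>n-k}\mu_\ell(\gam)}\right\}$ is then finite and works.

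\emph{Necessity.} Conversely, suppose $\gam$ is a \scb. Taking $(\mathbb{V},\|\cdot\|)=(\mathbb{R},|\cdot|)$ and $F(v):=-v/\tilde\tau$, so that $v+\tilde\tau F(v)=0$, collapses the scheme to the homogeneous recursion $(1+\gam b_0)u_n=\sum_{j\ge1}(a_j-\gam b_j)u_{n-j}$, whose characteristic polynomial is $\rho+\gam\sigma$; boundedness of $u_n$ uniformly over all start-up data forces the root condition on $\rho+\gam\sigma$, i.e.\ $-\gam\in\Ss$. If in addition $\rho+\gam\sigma$ had a zero $\zeta_0$ on the unit circle (necessarily simple, and $\ne1$ by consistency), then on a sequence space with the supremum seminorm and $F$ a shift---an isometry, hence admissible---one can launch the first coordinate along the resonant mode $\mathrm{Re}(\zeta_0^n)$; since $\sigma(\zeta_0)\ne0$ by irreducibility, the next coordinate is resonantly forced and grows linearly in $n$, contradicting \eqref{boundednessproperty}. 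Hence $-\gam\in\mathrm{int}(\Ss)$. Finally, to obtain the sign conditions one realises the response $(\mu_n(\gam))$ inside the iterates: with $F$ a suitable linear map (a shift on a sequence space, or a small matrix) for which $v+\tilde\tau F(v)$ is non-expansive, the coordinates of $u_n$ reproduce $\gam\mu_n(\gam)$ up to an explicit correction that is removed by superposition, using the linearity of the scheme in the start-up data; thus $\mu_{n_0}(\gam)<0$ for some $n_0\in\mathbb{N}^+$ would again violate \eqref{boundednessproperty}. I expect these necessity constructions, alongside the induction above, to be the delicate points; assembling the pieces yields the stated equivalence.
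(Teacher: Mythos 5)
The paper does not prove Theorem~\ref{thm1.1}: it is quoted verbatim from \cite[Theorem~2.2]{spijker2013} (which in turn rests on \cite{hundsdorferspijkermozartova2012}), so there is no in-paper argument to compare yours against. Judged on its own terms, your proposal follows the same general route as the cited literature: reduction to $\Delta t=\gam\tau$, the Shu--Osher rewriting in terms of $w_m=u_m+\tilde\tau F(u_m)$, the identification of $\mu_n(\gam)$ as the fundamental solution with generating function $\hat\sigma/(\hat\rho+\gam\hat\sigma)$, the translation of $-\gam\in\mathrm{int}(\Ss)$ into geometric decay, and the normalisation $\gam\sum_n\mu_n(\gam)=1$. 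These preparatory steps are correct. But the two places you yourself flag as delicate are exactly where the sketch is not yet a proof, and one of them hides a missing idea rather than a missing computation.

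In the sufficiency induction, closing the recursion requires $\sup_n C_n/(\gam T_n)<\infty$ with $C_n:=\sum_i(|\theta_{n,i}|+|\vartheta_{n,i}|)$ and $T_n:=\sum_{\ell>n-k}\mu_\ell(\gam)$. Your justification---``the dominant zero is real, positive, and enters with positive weight, so the two quantities decay at the same rate''---is the right conclusion but not an argument. You need (a) Pringsheim's theorem to place a dominant pole of the generating function on the positive real axis, (b) a one-sided power-sum inequality of Beukers--Tijdeman type (the paper's own Lemma~\ref{upperestlemma2} is the simplest instance) to show non-negativity forces that real root to carry the maximal multiplicity among dominant roots with strictly positive leading weight, and (c) an equidistribution/averaging estimate to convert ``$\mu_\ell\ge0$ with positive dominant real part'' into a genuine lower bound $T_n\ge c\,n^{m-1}r^n$ matching the upper bound on $C_n$; note that $T_n\ge0$ alone does not preclude $T_n=o(C_n)$ along a subsequence. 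The more serious gap is in the necessity of $\mu_n(\gam)\ge0$. Boundedness tolerates an \emph{arbitrary} fixed constant $\mu$, so exhibiting a single construction in which a negative $\mu_{n_0}(\gam)$ contributes a bounded defect to $\|u_n\|$ proves nothing; ``removed by superposition'' does not address this. One must show that the defect can be harvested \emph{repeatedly}: in the free formulation where the $w_m$ ($m\ge k$) are arbitrary subject to $\|w_m\|\le\|u_m\|$, a single negative internal coefficient makes the worst-case amplification factor exceed $1$ (since $\gam\sum_\ell|\mu_\ell(\gam)|=1+2\gam|\mu_{n_0}(\gam)|>1$), whence $\sup_n\|u_n\|$ grows geometrically; one then realises this free choice by an actual admissible $F$ on a sequence space. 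This iterated-exploitation mechanism is the core of \cite{hundsdorferspijkermozartova2012} and is absent from your sketch.
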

The above theorem is based on the material developed in 
\cite{hundsdorferspijkermozartova2012}. In \cite[Section 6]{hundsdorferspijkermozartova2012}, the authors numerically determine the maximum
\scb\ values for members of several parametric families of LMMs by repeatedly applying the following
test. For a particular LMM and given
$\gam>0$, they check if
$\gam$ is a $\scb$ by choosing a large $N\in\mathbb{N}$, and 
verifying $\mu_n(\gam)\ge 0$ for all $1\le n\le N$. However, as the authors point out in  \cite{hundsdorferspijkermozartova2012},  it is not obvious 
(neither \textit{a priori} nor \textit{a posteriori})
how large $N$ 
one should choose to conclude---with high 
certainty---that $\mu_n(\gam)\ge 0$ for all $n\in\mathbb{N}^+$. They typically
use $N\approx 10^3$; as a comparison, see our Remark \ref{bdf4remark}.

\subsection{The existence of a \scb}\label{section13}

For a fixed LMM and given $\gam>0$, Theorem \ref{thm1.1} provides a necessary and sufficient condition for $\gam$ to be a $\scb$. But to decide---with the 
help of this theorem---whether $\noscb$, one should check condition \eqref{necsufcond2} for infinitely many 
$\gam>0$ values, and for each $\gam$, there are infinitely many sign conditions
$\mu_n(\gam)\ge 0$ to be verified.

To overcome this difficulty, \cite[Theorem 3.1]{spijker2013}
combines Theorem \ref{thm1.1} with the results of \cite{tijdeman2013}
to present some \textit{simpler} conditions that are \textit{almost}
necessary and sufficient for $\isscb$. 
 ``Almost'' in the previous sentence means that the conditions in \cite[Theorem 3.1]{spijker2013} are 
necessary and sufficient for $\isscb$ (not in the full, but) in a slightly restricted
class of LMMs; and ``simpler'' means that these conditions do not involve the 
parametric recursion $\mu_n(\gam)$ in \eqref{mundef}, rather, a non-parametric recursion $\tau_n$
determined by the method coefficients as
\begin{equation}\label{taundef}
\tau_n :=\left\{
\begin{aligned}
&   0 & \text{ for } & n<0,\\
&   b_n+\sum_{j=1}^k a_j \tau_{n-j} & \text{for } & 0\le n \le k,\\
&  \sum_{j=1}^k a_j \tau_{n-j} &
 \text{for } & n>k.
\end{aligned}
\right.
\end{equation}
Since we will not work with \cite[Theorem 3.1]{spijker2013} directly, here we cite only   
\cite[Corollary 3.3]{spijker2013}. 
\begin{cor}\label{spijkercorollary3.3}
Suppose the LMM satisfies \eqref{LMMbasicassumptions}. We define 
\begin{equation}\label{n0def}
n_0:=\min\{ n: 1\le n\le k \text{ and } \tau_n\ne 0\}.
\end{equation}
\begin{itemize}
\item[(i)] If $\tau_n>0$ for all $n\ge n_0$, and the only root of the polynomial 
$\rho$ appearing in \eqref{rhosigmadef} with modulus $1$ is $1$, then $\isscb$.
\item[(ii)] If $\tau_n\le 0$ for some $n\ge n_0$ being a multiple of $n_0$, then 
$\noscb$.
\end{itemize}
\end{cor}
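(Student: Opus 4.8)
\medskip

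The plan is to deduce both parts from Theorem~\ref{thm1.1} by a generating-function analysis. Introduce the formal power series $T(z):=\sum_{n\ge 0}\tau_n z^{n}$ and, for $\gam>0$, $M_\gam(z):=\sum_{n\ge 0}\mu_n(\gam)z^{n}$. Translating the recursions \eqref{taundef} and \eqref{mundef} into generating-function identities and writing $\rho^{*}(z):=z^{k}\rho(1/z)=1-\sum_{j=1}^{k}a_j z^{j}$, $\sigma^{*}(z):=z^{k}\sigma(1/z)=\sum_{j=0}^{k}b_j z^{j}$ for the reversed generating polynomials, one obtains
\[
T(z)=\frac{\sigma^{*}(z)}{\rho^{*}(z)},\qquad
M_\gam(z)=\frac{\sigma^{*}(z)}{\rho^{*}(z)+\gam\,\sigma^{*}(z)}=\frac{T(z)}{1+\gam\,T(z)}.
\]
I would first record the elementary facts: $\rho^{*}(0)=1$; by consistency $\rho^{*}(1)=\rho(1)=0$ with $(\rho^{*})'(1)=-\sum_{j=1}^{k}ja_j=-\sigma(1)$; by irreducibility $\sigma^{*}(1)=\sigma(1)\ne 0$; the nonzero zeros of $\rho^{*}$ are reciprocals of the nonzero zeros of $\rho$, hence lie in $\{|z|\ge 1\}$, and $z=1$ is a simple zero of $\rho^{*}$ by the root condition. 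Thus $T$ has a simple pole at $z=1$ with residue $-\sigma^{*}(1)/(\rho^{*})'(1)=-1$, and when $z=1$ is the \emph{only} unit-modulus zero of $\rho$ this is the dominant pole, so $\tau_n\to 1$ as $n\to\infty$.

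\smallskip

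Part (ii) is the easier half, and I would prove its contrapositive. Suppose $\gam>0$ is a \scb. By Theorem~\ref{thm1.1} $\mu_n(\gam)\ge 0$ for all $n\ge 1$, and $\mu_0(\gam)=b_0/(1+\gam b_0)\ge 0$ since $b_0\ge 0$, so \emph{every} Taylor coefficient of $M_\gam$ is nonnegative. Inverting the relation above gives the formal identity $T(z)=M_\gam(z)/(1-\gam M_\gam(z))=\sum_{\ell\ge 1}\gam^{\ell-1}M_\gam(z)^{\ell}$ (legitimate since $1-\gam\mu_0(\gam)=(1+\gam b_0)^{-1}\ne 0$), whence each $\tau_n$ is a nonnegative combination of products of the $\mu_m(\gam)$; in particular $\tau_n\ge 0$ for all $n$, so $\tau_{n_0}>0$ because $\tau_{n_0}\ne 0$. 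For an index $n=m\,n_0$, keeping only the term $\ell=m$ and, inside $[z^{m n_0}]M_\gam^{m}$, only the contribution with all $m$ subscripts equal to $n_0$, yields $\tau_{m n_0}\ge \gam^{\,m-1}\mu_{n_0}(\gam)^{m}$. A short computation with $M_\gam=T/(1+\gam T)$ using $\tau_1=\dots=\tau_{n_0-1}=0$ gives the identity $\mu_{n_0}(\gam)=\tau_{n_0}/(1+\gam b_0)^{2}>0$. Hence $\tau_{m n_0}>0$ for every $m\ge 1$; equivalently, if $\tau_n\le 0$ for some multiple $n\ge n_0$ of $n_0$, then $\noscb$.

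\smallskip

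For part (i) I would exhibit one small $\gam>0$ meeting both conditions in \eqref{necsufcond2}. That $-\gam\in\mathrm{int}(\Ss)$ for all sufficiently small $\gam>0$ follows from a root-perturbation argument: the $k$ roots of $\rho+\gam\sigma$ depend continuously on $\gam$; the simple root $\zeta=1$ of $\rho$ moves, with $d\zeta/d\gam|_{\gam=0}=-\sigma(1)/\rho'(1)=-1$ by consistency, to $1-\gam+O(\gam^{2})$, hence strictly into the open unit disk, while the remaining $k-1$ roots --- strictly inside since $\zeta=1$ is the only modulus-$1$ root of $\rho$ --- stay there; as $1+\gam b_0>0$, ``all roots strictly inside the open unit disk'' is an open condition, giving $-\gam\in\mathrm{int}(\Ss)$. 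For the sign conditions, from $M_\gam=T/(1+\gam T)$ and $\tau_1=\dots=\tau_{n_0-1}=0$ one checks $\mu_n(\gam)=0$ for $1\le n<n_0$. For $n\ge n_0$ I would split into a bounded range and a tail. The hypotheses of (i) give $\tau_n\to 1$ and $\tau_n>0$ for all $n\ge n_0$, hence a uniform lower bound $\tau_n\ge\delta>0$ on $n\ge n_0$; since $\mu_n(\gam)\to\tau_n$ as $\gam\to 0^{+}$ for each fixed $n$, on the finite range $n_0\le n<N$ (with $N$ to be chosen) we get $\mu_n(\gam)>0$ once $\gam$ is small. On the tail $n\ge N$: for small $\gam$ the rational function $M_\gam$ has a unique dominant pole $z_{*}(\gam)=1/\zeta_{*}(\gam)$, simple, real, with $1<z_{*}(\gam)\le 1+C\gam$ and residue tending to $-1$, whereas all other poles have modulus $\ge\rho_1$ for a fixed $\rho_1>1$; writing $\mu_n(\gam)=c(\gam)\,z_{*}(\gam)^{-n-1}+R_n(\gam)$ with $c(\gam)\to 1$ and $|R_n(\gam)|\le C'\rho_1^{-n}$ uniformly in small $\gam$, the dominant term beats the remainder for all $n\ge N$ with an $N$ that can be taken independent of $\gam$. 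Combining, for all small enough $\gam>0$ we have $\mu_n(\gam)\ge 0$ for every $n\ge 1$ together with $-\gam\in\mathrm{int}(\Ss)$, so by Theorem~\ref{thm1.1} such a $\gam$ is a \scb; i.e.\ $\isscb$.

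\smallskip

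The main obstacle is the tail estimate in part (i): one must control, \emph{uniformly in the small parameter $\gam$}, the transition between the regime $n\lesssim 1/\gam$ (where $\mu_n(\gam)$ tracks $\tau_n$) and the regime $n\gtrsim 1/\gam$ (where the dominant-pole term $c(\gam)z_{*}(\gam)^{-n-1}$ governs the sign), so that the threshold $N$ above is genuinely $\gam$-independent and the remainder $R_n(\gam)$ is dominated on $\{n\ge N\}$. This is exactly the sort of ``(ultimate) positivity of a parametric linear recursion'' statement for which the results of \cite{tijdeman2013} are tailored, and an alternative to the explicit estimates sketched above is to invoke that machinery directly --- which is how \cite[Theorem~3.1]{spijker2013}, of which this corollary is a consequence, is established.
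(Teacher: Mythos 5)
Your proposal is correct in substance, but note that the paper contains no proof of this corollary at all: it is quoted verbatim from \cite[Corollary 3.3]{spijker2013}, where it is deduced from Theorem 3.1 of that paper, which in turn rests on the Beukers--Tijdeman one-sided power-sum inequalities \cite{tijdeman2013}. Your generating-function route through Theorem~\ref{thm1.1} is therefore genuinely different and self-contained. Part (ii) is the real gain: the identity $M_\gamma=T/(1+\gamma T)$, the coefficientwise-legitimate expansion $T=\sum_{\ell\ge1}\gamma^{\ell-1}M_\gamma^{\ell}$ (valid since $\gamma\mu_0(\gamma)=\gamma b_0/(1+\gamma b_0)<1$), the computation $\mu_{n_0}(\gamma)=\tau_{n_0}/(1+\gamma b_0)^2$, and the resulting bound $\tau_{mn_0}\ge\gamma^{m-1}\mu_{n_0}(\gamma)^m>0$ all check out, and they give the contrapositive of (ii) cleanly without any asymptotic analysis. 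Part (i) is right in outline --- $\rho'(1)=\sigma(1)$, the drift of the principal root to $1-\gamma+O(\gamma^2)$, and the residue $-1$ of $T$ at $z=1$ are all correct --- but the uniform tail bound $|R_n(\gamma)|\le C'\rho_1^{-n}$ is only asserted; to close it you should subtract the principal part $P_\gamma$ of $M_\gamma$ at the dominant pole and bound $M_\gamma-P_\gamma$ on a circle $|z|=r<\rho_1$ uniformly for $\gamma\in[0,\gamma_1]$ by continuity and compactness (including at $\gamma=0$, where $M_0-P_0=T+1/(z-1)$ is analytic in $|z|<\rho_1$), after which the comparison $(\rho_1/z_*(\gamma))^n\to\infty$ yields a $\gamma$-independent threshold $N$. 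That step is routine but not free, and you flag it honestly: it is exactly the uniform-in-parameter positivity statement that \cite{tijdeman2013} packages and that \cite{spijker2013} invokes instead of proving by hand. In short, your approach trades the external power-sum machinery for some explicit (and writable) complex-analytic estimates, and in exchange part (ii) becomes an elementary positivity-of-coefficients argument.
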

\noindent The index $n_0$ defined above can be shown to exist
due to consistency and zero-stability of the LMM.

As an application of \cite[Theorem 3.1]{spijker2013} or Corollary \ref{spijkercorollary3.3}, 
\cite[Section 5]{spijker2013} analyzes some well-known classical LMMs, including 
\begin{itemize}
\item the Adams--Moulton (or implicit Adams), 
\item the Adams--Bashforth (or explicit Adams), 
\item the BDF, 
\item the extrapolated BDF (EBDF), 
\item the Milne--Simpson and 
\item the Nystr\"om methods.
\end{itemize}
These investigations confirm and extend some earlier results 
\cite{hundsdorferruuthspiteri2003,hundsdorferruuth2006,hundsdorferspijkermozartova2011,hundsdorferspijkermozartova2012} concerning the existence of step-size coefficients for monotonicity
or step-size coefficients for boundedness.  
The results of \cite[Section 5]{spijker2013} have the following form. 

Consider a discrete family of LMMs from the previous paragraph, parametrized by the step number $k\in\mathbb{N}$. Let $1\le k_\text{min}\le k_\text{max}\le +\infty$ denote 
some fixed bounds on $k$ coming from practical considerations (e.g. zero-stability of the LMM), that is, we consider the step numbers   
$k_\text{min}\le k\le k_\text{max}$.
Then there exist two integers $0\le k_\text{mon} \le k_\text{bdd}$ such that
\begin{itemize}
\item[$\bullet$] $\isscm$ $\Longleftrightarrow k_\text{min}\le k\le k_\text{mon}$;
\item[$\bullet$] $(\noscm\text{ and }\isscb )\Longleftrightarrow k_\text{mon}+1\le k\le k_\text{bdd}$;
\item[$\bullet$] $\noscb \Longleftrightarrow    k_\text{bdd}+1\le k\le k_\text{max}$.
\end{itemize}
It is to be understood that if $\ell_1\le k\le \ell_2$ with $\ell_1>\ell_2$ in any of the inequalities above, then the corresponding case does not occur.
Some examples from \cite[Section 5]{spijker2013} are provided in the table below.
\begin{table}[H]
\centering
\begin{tabular}{|c||c|c|c|c|}
\hline
\textbf{LMM family} & $k_\text{min}$ & $k_\text{max}$  & $k_\text{mon}$ &  $k_\text{bdd}$ \\ \hline
Adams--Bashforth & 1 & $+\infty$ & 1 & 3 \\ \hline
BDF & 1 & 6 & 1 & 6 \\ \hline
EBDF & 1 & 6 & 1 &  5 \\ \hline
Milne--Simpson & 2 & $+\infty$ & 1 &  1 \\ \hline
\end{tabular}
\end{table}
Out of the several LMMs investigated in \cite[Section 5]{spijker2013}, there are
however two families---the BDF methods with $3\le k\le 6$ steps, and the EBDF methods with $3\le k\le 5$ steps---for which the corresponding inequalities 
\begin{equation}\label{taupositivity}
\tau_n>0\quad \text{for } n\ge n_0
\end{equation}
appearing in Corollary \ref{spijkercorollary3.3} are not verified completely.
More precisely, \eqref{taupositivity} is verified only up to a finite value $n_0\le n\le N$ (for example, up to $N=500$), and it is observed that, for these large $n$ values, $\tau_n$ is already close enough
to $\lim_{n\to+\infty}\tau_n=1$ to conclude (``we have no formal proof \ldots, but convincing numerical evidence instead'') the validity of \eqref{taupositivity}
(see \cite[Conclusions 5.3 and 5.4]{spijker2013}).

\section{Main results}\label{section2}

\subsection{Positivity of the $\tau_n$ sequences in the $\text{EBDF}$ family }

\begin{thm}\label{thm2.1}
Let us fix any $3\le k\le 5$ and consider the EBDF family with $k$ steps. Then the sequence $\tau_n$ satisfies
$\tau_n>0$ for $n\ge n_0=1$ (see  \eqref{taundef} and \eqref{n0def}).
\end{thm}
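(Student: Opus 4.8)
The plan is to reduce the infinitely many inequalities $\tau_n > 0$ ($n \ge 1$) to a finite computation by exploiting the explicit linear-recurrence structure of $\tau_n$. For a fixed $k$-step EBDF method, the sequence $\tau_n$ defined in \eqref{taundef} satisfies, for $n > k$, the homogeneous linear recursion $\tau_n = \sum_{j=1}^k a_j \tau_{n-j}$ with characteristic polynomial $\rho(\zeta)$ from \eqref{rhosigmadef}. By consistency and zero-stability, $\zeta = 1$ is a simple root of $\rho$ and all other roots lie strictly inside the unit disk (one must check, for each EBDF method with $3 \le k \le 5$, that $\rho$ has no other root of modulus $1$; this is a finite root-location check). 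Hence $\tau_n = c + r_n$ where $c = \lim_{n\to\infty}\tau_n$ is the residue at $\zeta = 1$ and $r_n \to 0$ geometrically, with an explicitly computable decay rate $\theta \in (0,1)$ given by the second-largest root modulus. One first identifies $c$ in closed form: since the generating function of $\tau_n$ is $\sigma(\zeta)/\rho(\zeta)$ in the appropriate sense, $c$ equals $\sigma(1)/\rho'(1)$, which by consistency (condition 1 in \eqref{LMMbasicassumptions}) equals $1$; so $\tau_n \to 1$.

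The key quantitative step is to turn "$r_n \to 0$ geometrically" into an effective bound $|r_n| \le C\,\theta^{n}$ with explicit $C$ and $\theta < 1$. I would obtain this by writing $\tau_n$ in terms of the companion matrix $M$ of $\rho$: $\tau_n = e^\top M^{n-k} w$ for suitable fixed vectors, split off the rank-one projection onto the $\zeta=1$ eigenspace, and bound the norm of the remaining part of $M^{n-k}$ using, e.g., an explicit similarity to Jordan form or a crude bound via the resolvent / a Gershgorin-type estimate adapted to the companion matrix. Any of these yields constants $C, \theta$ that are exact algebraic (or rational) numbers. Then choose $N$ with $C\,\theta^{N-k} < 1$; for all $n > N$ this forces $\tau_n = 1 + r_n > 1 - C\theta^{n} > 0$. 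It remains to verify $\tau_n > 0$ for the finitely many indices $1 \le n \le N$, which is a direct rational (or exact-arithmetic) computation from the recursion \eqref{taundef} — the EBDF coefficients $a_j, b_j$ are explicit rationals, so no rounding issues arise. Finally one checks $\tau_1 \ne 0$, i.e. $n_0 = 1$, which is immediate from $\tau_1 = b_1 + a_1 \tau_0 = b_1 + a_1 b_0$ for the EBDF methods (a one-line evaluation for each $k$).

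The main obstacle I expect is making the geometric tail bound genuinely \emph{explicit and rigorous} rather than merely asymptotic: one needs a clean, verifiable inequality $|\tau_n - 1| \le C\theta^n$ where $C$ and $\theta$ are produced by a finite, certifiable computation (exact root isolation for $\rho$, exact bound on the projection constant). The roots of $\rho$ for $k = 4, 5$ are algebraic numbers of degree up to $5$, so one cannot work with them symbolically in a fully closed form; instead I would use validated numerics — isolating each root in a rational rectangle and bounding $\theta$ from above by a rational number $< 1$, and bounding $C$ from above by a rational number — so that the single comparison $C\theta^{N-k} < 1$ becomes a rational inequality that \textit{Mathematica} can certify exactly. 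A secondary, more routine obstacle is that if it turns out $N$ is moderately large (a few hundred, consistent with the numerical evidence cited from \cite{spijker2013}), the finite check $\tau_n > 0$ for $n \le N$ must still be carried out in exact arithmetic, but this is mechanical. Once the tail bound is in hand, the theorem follows by combining it with the finite check and invoking part (i) of Corollary \ref{spijkercorollary3.3} only as motivation — the statement here is purely about the sign of $\tau_n$, so no further appeal is needed.
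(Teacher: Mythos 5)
Your proposal is correct and follows essentially the same route as the paper: decompose $\tau_n = 1 + \sum_j c_j \varrho_j^n$ over the sub-unit roots of $\rho$, bound $|c_j|$ and $|\varrho_j|$ by explicit rationals so the tail has modulus $<1$ beyond a small threshold $N$, and verify the remaining indices $1\le n\le N$ in exact arithmetic. (In the paper's implementation the coefficients $c_j$ all equal $1$ and the sub-dominant root moduli are at most $71/100$, so $N\le 5$ rather than the few hundred you anticipated, and no companion-matrix or resolvent machinery is needed.)
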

The above theorem completes and verifies 
the numerical proof of \cite[Conclusion 5.4]{spijker2013} regarding the EBDF methods with $k\in\{3,4,5\}$
steps. In the proof of Theorem \ref{thm2.1}, given in Sections \ref{proofsummaryEBDF} and \ref{EBDFmethodssection},  we explicitly represent  $\tau_n$ as a linear
combination of powers of algebraic numbers to estimate this sequence from below and 
hence prove its positivity. 

As a combination of \cite[Conclusion 5.4]{spijker2013} and our Theorem \ref{thm2.1} we obtain the following result.
\begin{cor}In the EBDF family
\begin{itemize}
\item $\isscm$ for the $1$-step EBDF method;
\item $\noscm$ but $\isscb$ for the $k$-step EBDF method with $k\in\{2,3,4,5\}$;
\item $\noscb$ for the $6$-step EBDF method.
\end{itemize}
\end{cor}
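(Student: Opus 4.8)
The plan is to prove Theorem~\ref{thm2.1} by obtaining, for each fixed $k\in\{3,4,5\}$, a closed-form expression for $\tau_n$ and then bounding it below by a positive quantity for all $n\ge 1$. First I would compute the method coefficients $a_j,b_j$ of the $k$-step EBDF method explicitly (these are rational numbers), form the characteristic polynomial of the linear recursion in \eqref{taundef}, namely $\zeta^k-\sum_{j=1}^k a_j\zeta^{k-j}$, and determine its roots. Since the recursion for $n>k$ is the homogeneous recursion associated with the polynomial $\rho$, we have $\tau_n=\sum_{i} c_i \lambda_i^{\,n}$ (with the usual modification if $\rho$ has repeated roots), where $\lambda_i$ are the roots of $\rho$ and the constants $c_i$ are fixed by the inhomogeneous "initial segment" $0\le n\le k$. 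By consistency, $\zeta=1$ is always a root of $\rho$, and since $\lim_{n\to\infty}\tau_n=1$ (as noted in the excerpt, and this is where zero-stability with $1$ the only unimodular root is used), the coefficient of $1^n$ must be exactly $1$. Hence I would write
\[
\tau_n = 1 + \sum_{i:\ |\lambda_i|<1} c_i\,\lambda_i^{\,n},
\]
and the task reduces to showing the perturbation term never drops below $-1$ for $n\ge 1$.

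The key estimate is then $\bigl|\tau_n-1\bigr|\le \sum_{i}|c_i|\,|\lambda_i|^{\,n}\le \left(\sum_i |c_i|\right)\varrho^{\,n}$, where $\varrho:=\max_{|\lambda_i|<1}|\lambda_i|<1$ is the second-largest root modulus of $\rho$. This gives $\tau_n>0$ automatically for all $n\ge N_k$, where $N_k$ is the smallest integer with $\left(\sum_i|c_i|\right)\varrho^{\,N_k}<1$; I would compute $\varrho$ and the $c_i$ as exact algebraic numbers (or with certified interval bounds) so that $N_k$ is rigorously determined. For the finitely many remaining indices $1\le n< N_k$, positivity of $\tau_n$ is checked by direct evaluation of the recursion \eqref{taundef} in exact (rational, then algebraic) arithmetic. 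Since $n_0=1$ for the EBDF family (one verifies $\tau_1=b_1\ne 0$, or more simply $b_0=0$ for EBDF so $\tau_0=0$ and $\tau_1\ne 0$, making $n_0=1$), this establishes $\tau_n>0$ for all $n\ge n_0$, which is exactly the statement of Theorem~\ref{thm2.1}. Combining this with Corollary~\ref{spijkercorollary3.3}(i)---whose hypothesis that $1$ is the only modulus-$1$ root of $\rho$ must be separately checked for each $k$, and holds for EBDF with $k\le 5$---yields $\isscb$ for $k\in\{3,4,5\}$; together with \cite[Conclusion 5.4]{spijker2013} (which handles $k=2$ via the same route and $k=6$ via Corollary~\ref{spijkercorollary3.3}(ii), and establishes $\noscm$ for $k\ge 2$ and $\isscm$ for $k=1$) this gives the three bullet points of the Corollary.

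The main obstacle I anticipate is controlling the algebraic numbers $\lambda_i$ and especially the combination coefficients $c_i$ rigorously: the $c_i$ are obtained by solving a linear system (a Vandermonde-type system in the $\lambda_i$ modified by the inhomogeneous terms $b_n$), so they are algebraic numbers of potentially high degree, and one needs a certified upper bound on $\sum_i|c_i|$ as well as a certified value of $\varrho$ strictly less than $1$. In practice this is handled by \emph{Mathematica}'s exact algebraic-number arithmetic (isolating roots of the integer polynomials $\rho$, represented as in \eqref{coeffdef}, and computing with them symbolically), but making the bound genuinely rigorous requires care that the numerical root isolation is validated. A secondary, more mundane issue is the possibility of repeated roots of $\rho$ for some $k$, which would change the ansatz to include polynomial-times-exponential terms $n^m\lambda_i^{\,n}$; these still decay and the bound $\sum |c_i|\,P(n)\varrho^{\,n}$ with a polynomial $P$ still forces positivity for large $n$, only enlarging $N_k$, so the structure of the argument is unaffected.
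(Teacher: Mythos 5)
Your proposal is correct and follows essentially the same route as the paper: the paper proves Theorem~\ref{thm2.1} by writing $\tau_n=1+\sum_j c_j\varrho_j^n$ with the $\varrho_j$ the non-unit roots of the characteristic polynomial, bounding the tail by $(\sum_j|c_j|)\,\varrho^{\,n}\le 9/10$ for $n$ beyond a small explicit threshold, checking the remaining initial terms directly, and then invoking Corollary~\ref{spijkercorollary3.3} together with \cite[Conclusion 5.4]{spijker2013} exactly as you describe. (In the actual EBDF computations all the $c_j$ turn out to equal $1$ and the roots are simple, so the complications you anticipate about high-degree combination coefficients and repeated roots do not arise.)
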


\subsection{Exact optimal \scb\  values in the BDF family}\label{section23exactoptimal}

We complete the numerical proof of \cite[Conclusion 5.3]{spijker2013} concerning the existence of \scb\ for the BDF methods with  $3\le k\le 6$ steps. 
However, instead of just proving the positivity of the corresponding sequences $\tau_n$, we directly
determine the exact and optimal values of the \scb\ constants for $2\le k\le 6$.
For the sake of completeness, the $k=1$ case (the implicit Euler method) is also included. 
The approximate numerical values of $\gmk$ below have been rounded down. The polynomial 
coefficients---see \eqref{coeffdef} for the notation---corresponding to the cases $k=5$ and $k=6$ have been aligned for easier readability (and they are to be read in the usual way, horizontally from left to right).

\begin{thm}\label{thm2.2}
 The optimal values of the step-size coefficients for boundedness $\gmk$ in the BDF
family are given by the following exact algebraic numbers:
\begin{itemize}

\item $\gmo=+\infty;$

\item $\gmtw=1/2;$

\item $\gmth\approx 0.831264155297$ is the smallest real root of the 4th-degree polynomial\\

{\centerline{\footnotesize{$\{5184, -539352, 4277340, -7093698, 3248425\};$}}}

\item $\gmfo\approx 0.486220284043$ is the unique real root of the 5th-degree polynomial\\

\centerline{\footnotesize{$\{147456, -4065024, 97751296, -178921248, 146499984, -39945535\};$}}

\item $\gmfi\approx 0.304213712525$ 
is the smaller real root of the 10th-degree polynomial
\begin{table}[H]
\center
\footnotesize{
\hskip0.6cm
\begin{tabular}{r r r}
\{9183300480000000000, & 85812841152000000000,  & 11922800956027200000000, \\ 
$-$158236459797931200000000, &  1300372831455671124000000, & $-$3469598208824475416400000, \\ 
5222219230639370911710000, & $-$4938342912266137089480000, & 2829602902356809601352800,\\
$-$897140360120473365541380, & 113406532200497326720157\};
\end{tabular}}
\end{table}
\item $\gmsi\approx 0.131359487166$ is the smaller real root of the 18th-degree polynomial 
\begin{table}[H]
\fontsize{8}{11}\selectfont
\center
\begin{tabular}{rr}
\{301499153838045275528311603200000000, & 122639585534504839818945201438720000000, \\
384963168041618344234237602954215424000000, & 27549570033081885223128023207444584857600000,\\
688321830171904949334479202088109368934400000, & $-$3841469418723966761157769983211793789485056000,\\
114843588487750902323103668249803599786305126400, & $-$1006269459507863531788997342497299304467812843520,\\
5587246198359348966734174906666273788289332150272, & $-$17429944795858965010882996868073155329514839408640, \\
35959114141443095864886240750517884787497897431040, & $-$53357827225132542443145327442029250536098863687680,\\
58779078470720235677143648519968524504336318905600, & $-$48117131040654192740877887801688549303578668712064, \\
28809153195856173726312967696976168633917662024240, & $-$12158530101520566099221248226347019432756062262240, \\
3383327891741061214240426918034255832010259451480, & $-$541370800878125712591610585145194659522378896880, \\
33328092641186254550760247661168148768262937067\}. &
\end{tabular}
\end{table}
\end{itemize}
\end{thm}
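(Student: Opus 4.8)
The plan is to apply the characterization of Theorem~\ref{thm1.1}: for a fixed $k$-step BDF method, $\gam>0$ is a \scb\ precisely when $-\gam\in\mathrm{int}(\Ss)$ and $\mu_n(\gam)\ge 0$ for every $n\in\mathbb{N}^+$. Because imposing a smaller step-size ratio is a weaker requirement, the set of admissible $\gam$ is an interval $(0,\gmk)$ or $(0,\gmk]$, so it suffices to (a) identify the exact value $\gam_k$ in the statement, (b) show that both conditions of \eqref{necsufcond2} hold for all $\gam\in(0,\gam_k]$, and (c) show that they fail for $\gam$ slightly larger than $\gam_k$. The BDF family is especially convenient here: $\sigma(\zeta)=b_0\zeta^k$ with $b_0>0$, so \eqref{mundef} collapses to $\mu_n(\gam)=0$ for $n<0$, $\mu_0(\gam)=b_0/(1+\gam b_0)$, and $(1+\gam b_0)\mu_n(\gam)=\sum_{j=1}^{\min\{n,k\}}a_j\mu_{n-j}(\gam)$ for $n\ge 1$, a recursion whose characteristic polynomial is $\mathcal{P}(\zeta,-\gam)=\rho(\zeta)+\gam b_0\zeta^k$. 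Clearing denominators, I would work with the integer polynomials $P_n(\gam):=(d+\gam\beta_0)^{\,n+1}\mu_n(\gam)\in\mathbb{Z}[\gam]$, where $d$ is the common denominator of the method coefficients and $\beta_0:=d\,b_0$; these have the same sign as $\mu_n(\gam)$ on $(0,\infty)$ and satisfy the polynomial recursion $P_n=\sum_{j=1}^{\min\{n,k\}}\alpha_j(d+\gam\beta_0)^{j-1}P_{n-j}$ with $P_0=\beta_0$ and $\alpha_j:=d\,a_j$.

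The cases $k=1$ and $k=2$ are elementary and serve as a template. For the implicit Euler method $\mu_n(\gam)=(1+\gam)^{-(n+1)}>0$ for all $n$, and $-\gam\in\mathrm{int}(\Ss)$ for every $\gam>0$ by $A$-stability, whence $\gmo=+\infty$. For BDF$2$ one checks that at $\gam=1/2$ the polynomial $\mathcal{P}(\cdot,-1/2)$ has a double root at $1/2$ and $\mu_n(1/2)=(n+1)\,2^{-(n+1)}>0$; for $\gam>1/2$ the two dominant roots of $\mathcal{P}(\cdot,-\gam)$ form a complex-conjugate pair with nonzero imaginary part, so $\mu_n(\gam)$ is an oscillating sequence and takes negative values; since $-\gam\in\mathrm{int}(\Ss)$ for all $\gam>0$, this gives $\gmtw=1/2$.

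For $k\in\{3,4,5,6\}$ the heart of the matter is step (b): $\mu_n(\gam)\ge 0$ for \emph{all} $n$ and \emph{all} $\gam\in(0,\gam_k]$. For fixed $\gam$ in this range I would use the spectral representation $\mu_n(\gam)=\sum_i c_i(\gam)\,\zeta_i(\gam)^n$ (with the usual confluent modification when $\mathcal{P}(\cdot,-\gam)$ has multiple roots), where the $\zeta_i(\gam)$ are the roots of $\mathcal{P}(\cdot,-\gam)$ and the $c_i(\gam)$ are determined by $\mu_0(\gam),\dots,\mu_{k-1}(\gam)$. One verifies that the dominant root $\zeta_1(\gam)$ is real, positive and simple with $c_1(\gam)>0$, so that the leading term $c_1(\gam)\zeta_1(\gam)^n>0$; this is the same mechanism used for $\tau_n$ in the proof of Theorem~\ref{thm2.1} (indeed $\tau_n=\mu_n(0)$). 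One then needs an explicit threshold $N_k$ and an explicit spectral-gap bound $\max_{i\ge 2}|\zeta_i(\gam)|\le\theta(\gam)\,\zeta_1(\gam)$ with $\theta(\gam)<1$, giving $|\mu_n(\gam)-c_1(\gam)\zeta_1(\gam)^n|\le C(\gam)\,(\theta(\gam)\zeta_1(\gam))^n<c_1(\gam)\zeta_1(\gam)^n$ for $n\ge N_k$, hence $\mu_n(\gam)>0$ there; for the finitely many indices $1\le n<N_k$ the inequality $\mu_n(\gam)\ge 0$ on $(0,\gam_k]$ reduces to checking that finitely many one-variable integer polynomials $P_n$ are nonnegative on $(0,\gam_k]$, which is decidable by exact real-root isolation, with $\gam_k$ itself represented by its minimal polynomial. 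The value $\gam_k$ is pinned down as the appropriate (smallest positive) root of the first polynomial among the $P_n$ that acquires a zero in the admissible $\gam$-range — a finite index $n^{*}_k$ for $k\ge 3$ — and after removing its integer content one verifies by exact arithmetic that its minimal polynomial is precisely the coefficient list displayed in the statement. Step (c) is then immediate: $\gam_k$ is a simple, sign-changing root of $P_{n^{*}_k}$, so $\mu_{n^{*}_k}(\gam)<0$ for $\gam$ slightly above $\gam_k$, and such $\gam$ is not a \scb; combined with (b) and the interval structure of the \scb-set this yields $\gmk=\gam_k$. Finally one records that $-\gam\in\mathrm{int}(\Ss)$ throughout $(0,\gam_k]$: for BDF$k$ with $k\le 6$ the negative real axis lies in $\mathrm{int}(\Ss)$, which can be confirmed by a Schur--Cohn test on the normalized polynomial $\zeta^k-\sum_j\frac{a_j}{1+\gam b_0}\zeta^{k-j}$, so the sign conditions on $\mu_n$ are the only binding ones.

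The main obstacle is the uniformity in the parameter: the threshold $N_k$, the gap function $\theta(\gam)$, the bound $C(\gam)$ and the sign patterns of $\zeta_1(\gam),c_1(\gam)$ must be controlled for \emph{every} $\gam\in(0,\gam_k]$, not merely on a finite sample — exactly the ``infinitely many sign conditions'' difficulty highlighted in the introduction. I would deal with this by exploiting continuity of the roots $\zeta_i(\gam)$ and of the coefficients $c_i(\gam)$ on the compact interval $[0,\gam_k]$ — with special care at $\gam=0$, where $\mathcal{P}(\cdot,0)=\rho$ has a simple root of modulus $1$ and $\mu_n$ degenerates to $\tau_n$ — either by establishing $\gam$-uniform estimates valid on all of $[0,\gam_k]$ or by partitioning $[0,\gam_k]$ into finitely many closed subintervals and proving a coarser uniform bound on each; this reduces the claim to a finite collection of exact computations in \textit{Mathematica}, in the spirit of \cite{locziketcheson2014}. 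As a possible simplification one may first try to show that $\gam\mapsto\mu_n(\gam)$ is nonincreasing on $(0,\gam_k]$ for every $n$ (as happens for $k\le 2$); if this monotonicity holds, step (b) collapses to proving $\mu_n(\gam_k)\ge 0$ for all $n$, a fixed-coefficient positivity problem of exactly the type handled in Theorem~\ref{thm2.1}, and no parameter-uniformity is needed.
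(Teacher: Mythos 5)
Your overall framework (Theorem~\ref{thm1.1}, the interval structure of the set of \scb's, the spectral representation of $\mu_n$ with a dominance/gap estimate, exact algebraic arithmetic) matches the paper's, and your treatment of $k=1,2$ is essentially the paper's. But there is a genuine gap in how you pin down $\gam_k$ and prove the upper bound $\gmk\le\gam_k$ for $k\in\{4,5,6\}$. You assert that $\gam_k$ is ``the smallest positive root of the first polynomial among the $P_n$ that acquires a zero in the admissible range --- a finite index $n^*_k$ for $k\ge 3$,'' and that the upper bound then follows because $\mu_{n^*_k}$ changes sign there. This mechanism (the paper's Lemma~\ref{upperestlemma1}) works only for $k=3$, where $\mu_6(\gam^*)=0$. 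For $k=4,5,6$ no such finite index exists: the paper proves $\mu_n(\gam^*)>0$ \emph{strictly} for all $n$ at the optimal value, so $\gam^*$ is not a root of any $P_n$, and the displayed minimal polynomials of degrees $5$, $10$, $18$ do not arise from any $\mu_n$. They arise from a different algebraic condition: the positive real dominant root $\vr_1(\gam)$ of the characteristic polynomial has the same modulus as a complex-conjugate pair exactly at $\gam=\gam^*$, and for $\gam>\gam^*$ that pair becomes strictly dominant. The upper bound then requires a separate oscillation argument (the paper's Lemma~\ref{upperestlemma2}: $wz^n+\bar w\bar z^n+\nu_n<0$ infinitely often when $|z|=1$, $z\notin\mathbb{R}$, $w\ne 0$, $\nu_n\to 0$), applied for each $\gam=\gam^*+\varepsilon$. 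You actually use exactly this reasoning for $k=2$, but then abandon it for $k\ge 3$; without it the cases $k=4,5,6$ cannot be closed, and the first sign violation occurs only at enormous indices (e.g.\ $n\approx 2.7\cdot 10^4$ for BDF4 at $\gam=0.48625$, cf.\ Remark~\ref{bdf4remark}), so no naive search for a vanishing $P_n$ will find $\gam^*$.

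A second, non-fatal issue: you identify ``uniformity in the parameter $\gam$ over $(0,\gam_k]$'' as the main obstacle in the lower-bound step and propose interval subdivision or a monotonicity argument to handle it. This obstacle is illusory. Since Theorem~\ref{thm1.1} is an equivalence, verifying $-\gam_k\in\mathrm{int}(\Ss)$ and $\mu_n(\gam_k)\ge 0$ for all $n$ at the \emph{single} value $\gam=\gam_k$ already certifies that $\gam_k$ is a \scb, hence $\gmk\ge\gam_k$; no statement about $\mu_n(\gam)$ for $0<\gam<\gam_k$ is needed, and no monotonicity of $\gam\mapsto\mu_n(\gam)$ is required. The paper's entire lower-bound argument is a fixed-coefficient positivity proof at $\gam=\gam^*$ (dominant positive real root $\vr_1$, coefficient $c_1>0$, and an explicit bound showing the remaining terms never overcome $c_1\vr_1^n$), exactly the ``possible simplification'' you mention at the end --- but it is available unconditionally, not contingent on monotonicity.
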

The proofs of the above results are given in Sections \ref{proofsummaryBDF} and \ref{BDFmethodssection}.
From a technical point of view, the proof of the $k=3$ case is different from the other cases,
see Remark \ref{remark32}.

\subsection{Exact optimal \scb\ values in the Adams--Bashforth family}

To further illustrate our techniques, we have computed the largest \scb\ 
values for an explicit LMM family
as well; we chose the Adams--Bashforth methods with $1\le k\le 4$ steps.

For $k=1$ (i.e.~for the explicit Euler method) 
it is known (\cite[Theorem 5.2]{spijker2013})
 that $\isscm$, hence $\isscb$. 

For any $k\ge 4$, \cite[Theorem 5.2]{spijker2013} proves---with the help of the
sequence $\tau_n$---that 
$\noscb$.
The reason we include the $k=4$ case here is to show an example of using
the parametric sequence $\mu_n(\gam)$ and Theorem \ref{thm1.1} instead of $\tau_n$ 
in Corollary \ref{spijkercorollary3.3} (\textit{ii}) to detect $\noscb$.

\begin{thm} The optimal values of the step-size coefficients for boundedness in the 
Adams--Bashforth family are given by the rational numbers below:
\begin{itemize}
\item $\gmo=1;$
\item $\gmtw=4/9\approx 0.44444;$
\item $\gmth=84/529\approx 0.15879;$
\item for $k=4$, $\noscb$.
\end{itemize}
\end{thm}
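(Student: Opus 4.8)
The plan is to treat each of the four Adams--Bashforth cases separately, using Corollary~\ref{spijkercorollary3.3} for $k\le 3$ to pin down exact $\scb$ values, and switching to Theorem~\ref{thm1.1} (via the parametric recursion $\mu_n(\gam)$) for $k=4$ to establish $\noscb$. First I would write down the method coefficients $a_j,b_j$ for the $k$-step AB method and, from them, the two relevant polynomials $\rho,\sigma$ and the recursions $\tau_n$ and $\mu_n(\gam)$ defined in \eqref{taundef} and \eqref{mundef}. For $k=1$ the explicit Euler method is already known from \cite[Theorem 5.2]{spijker2013} to have $\isscm$; the only thing to add is that the maximal $\scm$ equals $1$ (for explicit Euler the condition $\|v+\tau F(v)\|\le\|v\|$ already gives $\|u_{n}\|\le\|u_{n-1}\|$ whenever $\Delta t\le\tau$, and no larger coefficient works), and since $\gm$ for a method with positive $\scm$ coincides with the supremum of admissible step-size coefficients, $\gmo=1$.

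For $k=2$ and $k=3$ the strategy is: (a) verify that $\tau_n>0$ for all $n\ge n_0$, so that part~(i) of Corollary~\ref{spijkercorollary3.3} gives $\isscb$ (the root condition on $\rho$ is immediate since for Adams methods $\rho(\zeta)=\zeta^{k}-\zeta^{k-1}$ has the simple dominant root $1$ and all other roots $0$); and (b) compute $\gm$ exactly by finding, via Theorem~\ref{thm1.1}, the largest $\gam$ for which both $-\gam\in\mathrm{int}(\Ss)$ and $\mu_n(\gam)\ge 0$ for all $n$. Because $\rho-\gam\sigma$ has low degree here, the recursion $\mu_n(\gam)$ satisfies a short linear recurrence with characteristic roots that are algebraic functions of $\gam$; the threshold value of $\gam$ is the one at which some $\mu_n(\gam)$ first hits $0$ or at which a root of $\rho+\gam\sigma$ crosses the unit circle. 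I expect this threshold to be governed by the boundary-of-stability-region condition rather than by an individual sign condition, giving the clean rationals $\gmtw=4/9$ and $\gmth=84/529$ (note $529=23^2$, suggesting a discriminant computation). Concretely I would parametrize the dominant root $\zeta(\gam)$ of $\rho+\gam\sigma$, impose $|\zeta(\gam)|=1$, and solve the resulting polynomial equation in $\gam$; then separately check that for $\gam$ below this value all $\mu_n(\gam)\ge0$ (using a closed form for $\mu_n$ in terms of the roots, as in the proof of Theorem~\ref{thm2.1}), and that at the threshold the boundedness constant $\mu$ still stays finite.

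For $k=4$ the claim is $\noscb$. The cleanest route, and the one the text advertises, is to exhibit a single index $n$ and show $\mu_n(\gam)<0$ for every $\gam>0$ (equivalently, that no $\gam>0$ satisfies all the sign conditions of Theorem~\ref{thm1.1}); alternatively one shows $\tau_n\le 0$ for some $n\ge n_0$ that is a multiple of $n_0$ and invokes Corollary~\ref{spijkercorollary3.3}(ii). I would compute $\mu_n(\gam)$ symbolically for small $n$ as rational functions of $\gam$ and locate the smallest $n$ for which the numerator, as a polynomial in $\gam$, is negative on all of $(0,\infty)$ (or has no positive root below the stability threshold $-\gam\in\mathrm{int}(\Ss)$); a Sturm-sequence / real-root-isolation argument in \textit{Mathematica} then makes this rigorous. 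The main obstacle I anticipate is the exact $\scb$ computation for $k=3$: one must be certain that the binding constraint is the stability-region boundary and not one of the infinitely many sign conditions $\mu_n(\gam)\ge 0$, which requires a uniform lower bound on $\mu_n(\gam)$ for $\gam$ up to the candidate optimum $84/529$ — this is exactly the kind of "establish non-negativity of a parametric linear recursion" step flagged in the remark after the introduction, and it is handled by writing $\mu_n(\gam)$ in terms of powers of the (algebraic) characteristic roots and estimating the dominant term against the rest, just as in the EBDF proof of Theorem~\ref{thm2.1}.
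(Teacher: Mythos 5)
Your overall framework (bound $\gm$ from above by locating a failing condition in Theorem~\ref{thm1.1}, then verify $\mu_n(\gam^*)\ge 0$ for all $n$ at the candidate value via a closed-form root representation, plus a single early-index obstruction for $k=4$) matches the paper, and your treatment of $k=1$ and $k=4$ is essentially what the paper does (for $k=4$ no Sturm-sequence argument is needed: $\mu_2(\gam)=-3025\gam/576-1/6$ is manifestly negative for every $\gam>0$). The genuine gap is in your identification of the binding constraint for $k=2$ and $k=3$. You bet that the threshold is governed by the stability-region boundary and propose, concretely, to impose $|\zeta(\gam)|=1$ on the dominant root of $\rho+\gam\sigma$. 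That computation returns the endpoints of the real stability intervals, namely $\gam=1$ for AB2 and $\gam=6/11$ for AB3, not $4/9$ and $84/529$; your subsequent check that all $\mu_n(\gam)\ge 0$ below that value would then fail at $n=2$, and the plan as written does not recover. The optimum is in fact determined by the very first nontrivial sign condition: $\mu_2(\gam)=-\tfrac{9}{4}\gam+1$ for AB2 and $\mu_2(\gam)=-\tfrac{529}{144}\gam+\tfrac{7}{12}$ for AB3, whose simple zeros are exactly $4/9$ and $84/529$, so Lemma~\ref{upperestlemma1} with $n=2$ supplies the upper bound. The factor $529=23^2$ is not the trace of a discriminant computation; it is simply $(23/12)^2$, the square of the leading Adams--Bashforth weight entering the second step of the recursion \eqref{mundef}.

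Once the correct candidate is in hand, the remainder of your plan is sound and coincides with the paper's: confirm $-\gam^*\in\mathrm{int}(\Ss)$ (which is slack, since $4/9<1$ and $84/529<6/11$) and prove $\mu_n(\gam^*)\ge 0$ for all $n\ge 1$ so that \eqref{17mod} applies --- in closed form for AB2, where $\mu_n(4/9)=3^{1-n}\left(2^n-4(-1)^n\right)/4$, and by a dominant-root estimate for AB3, where the dominant characteristic root at $\gam=84/529$ is a positive real and the two subdominant contributions are bounded by $5\cdot 2^{-n}+3\cdot 4^{-n}<c_3$ for $n\ge 3$. Your stated ``main obstacle'' is therefore inverted: the difficulty is not to rule out the sign conditions in favour of the stability constraint, but to recognize that the $n=2$ sign condition \emph{is} the active constraint while the stability condition never binds for this family.
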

\noindent The proofs of these results are found in Sections \ref{proofsummaryAB} and \ref{ABmethodssection}.

\section{Proofs}\label{section3}

\subsection{Summary of the proof techniques for the EBDF methods}\label{proofsummaryEBDF}

The proofs in Section \ref{EBDFmethodssection} for the EBDF methods use the following argument.
 Since $\tau_n$ in \eqref{taundef} is a solution of a linear recursion, it is 
represented as
\begin{equation}\label{taurepresentation}
\tau_n=\sum_{j=1}^k c_j \varrho_{j}^n,
\end{equation}
where the quantities $\varrho_{j}\in\mathbb{C}$ 
are the roots of the corresponding characteristic polynomial 
(without multiple roots for each EBDF method), and the constants
$c_j\in\mathbb{C}$ are determined by the starting values. 
By bounding $|c_j|$ and $|\varrho_j|$, we prove the inequality $\tau_n>0$ for all $n\ge 1$.

\subsection{Summary of the proof techniques for the BDF methods}\label{proofsummaryBDF}

The proofs in Section \ref{BDFmethodssection} for the BDF methods are based on the following.
For any given $\gam>0$, the linear recursion \eqref{mundef} takes the form
\begin{equation}\label{genmunrec}
\sum_{j=0}^k c_{j}(\gamma)\mu_{n-j}(\gamma)=0\quad (k\le n\in\mathbb{N}),
\end{equation}
where the coefficients $c_{j}(\gam)$  ($0\le j\le k$) and the 
 starting values $\mu_j(\gam)$ ($0\le j\le k-1$) are determined by the LMM.
The corresponding
characteristic polynomial is denoted by
\begin{equation}\label{Ppkcharpoly}
\Pp_k(\vr,\gam):=\sum_{j=0}^k c_{j}(\gamma)\vr^{k-j}.
\end{equation}
We apply the characterization in Theorem \ref{thm1.1} together with Observations 1-4 presented below. 
Lemma \ref{upperestlemma1} and Lemma \ref{upperestlemma2} will be used to bound $\gm$ from above for the $k$-step BDF methods with $k=3$ and $k\in\{2, 4, 5, 6\}$, respectively. Then, 
by using representations similar to \eqref{taurepresentation} and Observation 4, we show in each case that the proposed upper bound for $\gm$ is sharp.
\bigskip
\bigskip

\noindent \quad $\bullet$ \textbf{Observation 1}

\noindent For a $k$-step BDF method ($1\le k\le 6$), it is known \cite{hairerwanner} that
$-\gam\in \mathrm{int}(\Ss)$ for any $\gam>0$. Therefore, 
the condition \eqref{necsufcond2} in Theorem \ref{thm1.1} reduces to 
$\mu_n(\gam)\ge 0$ ($n\in\mathbb{N}^+$).
\bigskip
\bigskip

\noindent \quad $\bullet$ \textbf{Observation 2}

\noindent It is easily seen from Definition \ref{scbdef} that if $\gam_0>0$ is a $\scb$, then 
each number from the interval $(0,\gam_0]$ is also a $\scb$; thus, by \eqref{necsufcond2}, we also have  
$\mu_n(\gam)\ge 0$ for all $n\in\mathbb{N}^+$ and $\gam\in(0,\gam_0]$.
Since the function $\gam\mapsto \mu_n(\gam)$ (clearly  
being a rational function for any fixed $n\in\mathbb{N}$ due to the form of the linear recursion \eqref{mundef}) cannot be non-negative in a neighborhood of a
simple zero, we immediately obtain the following upper bound on $\gm$ 
(in the lemma, $\mu'_n$ denotes the derivative of the function $\mu_n(\cdot)$). 
\begin{lem}\label{upperestlemma1}
Suppose  there exist some $n\in\mathbb{N}^+$ and $\gam^*>0$ such that $\mu_n(\gamma^*)=0$ and $\mu'_n(\gamma^*)\in\mathbb{R}\setminus\{0\}$. Then
$\gm\le\gam^*$.
\end{lem}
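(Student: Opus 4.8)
The plan is to argue by contradiction using the monotone‑in‑$\gamma$ structure of the $\scb$ property together with elementary real analysis applied to the rational function $\gamma\mapsto\mu_n(\gamma)$. Suppose, contrary to the claim, that there is some $\scb$ value $\gamma_0>\gamma^*$. By Observation 2 (i.e.\ the monotonicity statement that every number in $(0,\gamma_0]$ is then also a $\scb$), and since $\gamma^*\in(0,\gamma_0]$, some open interval around $\gamma^*$ is contained in $(0,\gamma_0]$, and hence by Theorem \ref{thm1.1} (in the form recorded in \eqref{necsufcond2}) we must have $\mu_n(\gamma)\ge 0$ for all $\gamma$ in that neighborhood of $\gamma^*$ — in particular, $\mu_n$ is non‑negative on both sides of $\gamma^*$.

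Next I would examine $\mu_n$ locally at $\gamma^*$. From the recursion \eqref{mundef}, for each fixed $n$ the map $\gamma\mapsto\mu_n(\gamma)$ is a rational function of $\gamma$ (each step multiplies by $\frac{1}{1-\gamma b_0}$ and adds an affine‑in‑$\gamma$ combination of earlier terms, so by induction $\mu_n$ is rational with poles only at $\gamma=1/b_0$). Since by hypothesis $\gamma^*>0$ and $\mu_n(\gamma^*)=0$ with $\mu_n'(\gamma^*)\ne 0$, the function $\mu_n$ is differentiable at $\gamma^*$ (so $\gamma^*$ is not a pole, i.e.\ $\gamma^*\ne 1/b_0$), and the first‑order Taylor expansion gives $\mu_n(\gamma^*+h)=\mu_n'(\gamma^*)\,h+o(h)$ as $h\to 0$. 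Because $\mu_n'(\gamma^*)$ is a nonzero real number, $\mu_n(\gamma^*+h)$ changes sign as $h$ passes through $0$: it is negative for small $h$ of the sign opposite to $\operatorname{sign}\mu_n'(\gamma^*)$. This contradicts the non‑negativity of $\mu_n$ on a full neighborhood of $\gamma^*$ established in the previous paragraph. Hence no $\scb$ value exceeds $\gamma^*$, and therefore $\gm=\sup\{\gamma>0:\gamma\text{ is a }\scb\}\le\gamma^*$.

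The only genuinely delicate point is the bookkeeping that $\mu_n$ is honestly differentiable at $\gamma^*$ in the real sense — that is, that $\gamma^*$ is not a pole of the rational function and that the hypothesis $\mu_n'(\gamma^*)\in\mathbb{R}\setminus\{0\}$ is not vacuous; but this is exactly what is being assumed, so there is nothing to prove beyond noting that a rational function is $C^\infty$ away from its poles. Everything else is a one‑line sign argument. No subtlety about which root of $\Pp_k(\vr,\gam)$ dominates or about convergence of the sequence enters here; those issues arise only later, when one verifies that the bound $\gamma^*$ produced by this lemma is actually attained.
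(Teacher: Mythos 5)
Your argument is exactly the paper's: Observation~2 gives $\mu_n(\gamma)\ge 0$ on all of $(0,\gamma_0]$ for any purported $\scb$ $\gamma_0>\gamma^*$, hence on a neighborhood of $\gamma^*$, which is impossible at a simple zero of the rational function $\mu_n(\cdot)$. The proposal is correct and follows the same route, just spelled out in more detail.
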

\bigskip
\smallskip
\noindent \quad $\bullet$ \textbf{Observation 3}

\noindent The following lemma will be applied to bound $\gm$ from above when the characteristic polynomial has a unique pair
of complex conjugate roots that are dominant.  
\begin{lem}\label{upperestlemma2}
Suppose that $z\in\mathbb{C}\setminus\mathbb{R}$ with $|z|=1$, $w\in\mathbb{C}\setminus\{0\}$, and a \emph{real} sequence $\nu_n\to 0$ ($n\to +\infty$) are given. Then
$w z^n+\bar{w} (\bar{z})^n+\nu_n<0$ for infinitely many $n\in\mathbb{N}$.
\end{lem}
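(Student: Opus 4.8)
The plan is to reduce the statement to an elementary fact about the distribution of the angles $n\arg(z)$ modulo $2\pi$. Writing $z=e^{i\theta}$ with $\theta\in(0,2\pi)$ and $\theta\ne\pi$ (possible since $z\notin\mathbb{R}$), and $w=|w|e^{i\phi}$, one has $w z^n+\bar w(\bar z)^n=2\,\mathrm{Re}(w z^n)=2|w|\cos(n\theta+\phi)$. Because $\nu_n\to 0$, it is enough to exhibit a constant $\delta>0$ and infinitely many $n\in\mathbb{N}$ for which $\cos(n\theta+\phi)\le-\delta$: for all such $n$ that are large enough we will also have $|\nu_n|<|w|\delta$, and then $w z^n+\bar w(\bar z)^n+\nu_n\le -2|w|\delta+|w|\delta=-|w|\delta<0$.

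First I would treat the case $\theta/(2\pi)\notin\mathbb{Q}$. Then the orbit $\{n\theta+\phi \bmod 2\pi : n\in\mathbb{N}\}$ of an irrational rotation is dense in $[0,2\pi)$ (a standard fact, provable by Dirichlet's pigeonhole argument), so it meets the nonempty open arc $\{\alpha:\cos\alpha<-1/2\}$ for infinitely many $n$; take $\delta=1/2$.

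Next, the case $\theta/(2\pi)=p/q\in\mathbb{Q}$ in lowest terms. Since $z\ne\pm1$ we have $q\ge 3$. Put $\omega:=e^{2\pi i/q}$, so that $z^n=\omega^{np}$ depends only on $n\bmod q$, and as $n$ runs over $\mathbb{N}$ the exponent $np\bmod q$ runs over all residues modulo $q$ (because $\gcd(p,q)=1$). I claim $\mathrm{Re}(w\omega^{j})<0$ for at least one $j\in\{0,\dots,q-1\}$. If not, then $\mathrm{Re}(w\omega^{j})\ge0$ for every $j$, while $\sum_{j=0}^{q-1}w\omega^{j}=w\sum_{j=0}^{q-1}\omega^{j}=0$ forces $\sum_{j=0}^{q-1}\mathrm{Re}(w\omega^{j})=0$; hence $\mathrm{Re}(w\omega^{j})=0$ for all $j$, i.e.\ $w$ and $w\omega$ are both purely imaginary (and nonzero), so their quotient $\omega$ is real --- impossible for $q\ge 3$. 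Fix such a $j$, set $-2c:=2\,\mathrm{Re}(w\omega^{j})<0$, and pick $m\in\{0,\dots,q-1\}$ with $mp\equiv j\pmod q$. Then for every $n$ in the (infinite) arithmetic progression $n\equiv m\pmod q$ we get $w z^n+\bar w(\bar z)^n=2\,\mathrm{Re}(w\omega^{j})=-2c$, and for such $n$ large enough $\nu_n<c$, yielding $w z^n+\bar w(\bar z)^n+\nu_n\le-2c+c=-c<0$, which proves the lemma.

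The only mild subtleties --- the closest thing to an obstacle --- are: (a) extracting a single constant $\delta$ (resp.\ $c$) \emph{before} invoking $\nu_n\to0$, so that the vanishing real perturbation cannot spoil the strict negativity; and (b) in the rational case, using $q\ge3$ together with the vanishing of the sum of all $q$-th roots of unity to exclude the degenerate possibility that $w\omega^j$ is purely imaginary for every $j$. Everything else is routine.
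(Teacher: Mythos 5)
Your proof is correct, but it takes a genuinely different route from the paper's. You reduce to $2|w|\cos(n\theta+\phi)$ exactly as the paper does, but then you split into the cases $\theta/(2\pi)$ irrational (density of the irrational rotation orbit puts $n\theta+\phi$ into the arc $\{\cos<-1/2\}$ infinitely often) and rational (a clean averaging argument over the $q$-th roots of unity, using $\sum_j\omega^j=0$ and $q\ge3$ to produce a residue class on which $2\,\mathrm{Re}(w\omega^j)$ is a fixed negative constant). The paper instead gives a single uniform argument with no case distinction: writing $z=e^{i\varphi}$ with $\delta<\varphi<\pi-\delta$, it observes that the set of real $n$ with $\cos(n\varphi+\psi)\le\cos(\pi/2+\delta/2)$ is a union of intervals each of length $(\pi-\delta)/\varphi>1$, so each contains an integer; this avoids any appeal to equidistribution and treats rational and irrational $\varphi/(2\pi)$ identically. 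Both arguments correctly extract a fixed negative constant \emph{before} letting $\nu_n\to0$, which is the essential point. Your rational case is arguably more explicit (it exhibits an arithmetic progression of bad indices), while the irrational case leans on a heavier standard fact; there you should also note that the orbit points $n\theta+\phi \bmod 2\pi$ are pairwise distinct (or that every tail of the orbit is still dense), so that ``the dense orbit meets an open arc'' really yields infinitely many indices $n$ rather than just one. With that one-line remark added, your proof is complete.
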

\begin{proof} We introduce $\varphi, \psi\in[0,2\pi)$ via the relations $z=\exp(i\varphi)$ and $w=|w|\exp(i\psi)$. Due to symmetry, we can suppose that $\varphi\in(0,\pi)$, so there is a 
 $\delta\in(0,\pi/2)$ such that $\delta<\varphi<\pi-\delta$.
Then 
\begin{equation}\label{zwnun}
w z^n+\bar{w} (\bar{z})^n+\nu_n=2|w|\cos\left(n\varphi+\psi\right)+\nu_n.
\end{equation} 
We show that 
\begin{equation}\label{cosineq}
\cos\left(n\varphi+\psi\right)\le\cos(\pi/2+\delta/2) \text{ for infinitely many } n.
\end{equation} 
Indeed, the inequality in \eqref{cosineq} holds if and only if $n\in\mathbb{N}$ and $k\in\mathbb{Z}$
are chosen such that
\begin{equation}\label{LHSRHSineq}
\text{LHS}:=(\pi/2+\delta/2-\psi+2\pi k)/\varphi\le n \le(3\pi/2-\delta/2-\psi+2\pi k)/\varphi=:\text{RHS}.
\end{equation}
But $\text{RHS}-\text{LHS}=(\pi-\delta)/\varphi>1$, so, by taking 
$k\in\mathbb{N}$ larger and larger, we see that there are infinitely many
$n\in\mathbb{N}$ satisfying  \eqref{LHSRHSineq}.
Finally, by using $|w|\ne 0$, \eqref{cosineq}, $\cos(\pi/2+\delta/2)<0$ and $\nu_n\to 0$, we get that \eqref{zwnun} is also negative for infinitely many $n$ indices.
\end{proof}
\bigskip
\smallskip
\noindent \quad $\bullet$ \textbf{Observation 4}

\noindent By taking into account the first sentence of Observation 2, we get the following
lower bound.
\begin{equation}\label{sect32intro}
\exists\,\gam_0>0 :   \mu_n(\gam_0)\ge 0\  (\forall\,n\in\mathbb{N}^+)
\implies \gm\ge \gam_0.
\end{equation}

\begin{rem}
Notice the similarities between  Lemma \ref{upperestlemma1} and \cite[Lemma 4.5]{kraaijevanger},
and between Lemma \ref{upperestlemma2} and \cite[Lemma 3.1]{kraaijevanger}.
Also compare Lemma \ref{upperestlemma2} and \cite[Theorem 4.3]{spijker2013}.
\end{rem}
\begin{rem}\label{bdf4remark}
Obtaining the exact value of $\gmfo\approx 0.48622$ proved to be significantly harder
than determining that of $\gmth$,
because we could not apply Lemma \ref{upperestlemma1} to bound $\gmfo$
from above. 
The value of $\gmfo$ was found via a series of numerical experiments. For example, to see 
$\gmfo<0.48625$, one checks that the sequence $\mu_n$ in Theorem \ref{thm1.1} for $1\le n\le 27000$ satisfies
\[
\mu_n({48625/100000})<0 \Longleftrightarrow n\in\{ 26814, 26875, 26886, 26936, 26947, 26997\}.
\]
To find all these six indices, we used 16000 digits of 
precision to evaluate the terms of the recursion $\mu_n({48625/100000})$---15000 digits would be insufficient. In fact, these experiments led to the
formulation of Lemma \ref{upperestlemma2}.
\end{rem}
\begin{rem}\label{remark32}
Regarding the determination of $\gmth$, the characteristic polynomial 
$\Pp_3(\cdot,\gam)$ has one real root $\vr_1(\gam)>0$ and a pair of complex conjugate roots
  $\vr_{2,3}(\gam)$ with $|\vr_1(\gam)|=|\vr_2(\gam)|=|\vr_3(\gam)|$ for $\gam=5/6\approx 0.83333$. From Lemma \ref{upperestlemma2} we would get the bound
$\gmth\le 5/6$, but this bound is not sharp. However, Lemma \ref{upperestlemma1}
with $n=6$ yields the exact value of $\gmth\approx 0.83126$.
\end{rem}

\subsection{Summary of the proof techniques for the Adams--Bashforth methods}\label{proofsummaryAB}

Since these LMMs are explicit, we have $b_0=0$ in \eqref{LMMform}, so from \eqref{mundef} we see that for any $n\in\mathbb{N}$ the function $\gam\mapsto \mu_n(\gam)$ is a polynomial and $\mu_0(\gam)=0$.
For $1\le k\le 3$, we study the roots of these polynomials $\mu_n(\cdot)$ for small $n$
to conjecture the value of $\gmk$. 
Of course, Observation 1 from the previous section 
cannot be applied now, because we have to take into account
the condition $-\gamma\in \mathrm{int}(\Ss)$ in \eqref{necsufcond2} as well.
So we use Lemma \ref{upperestlemma1} together with 
\begin{equation}\label{17mod}
(\exists\,\gam_0>0 :   -\gam_0\in \mathrm{int}(\Ss) \text{ and } \mu_n(\gam_0)\ge 0\  (\forall\,n\in\mathbb{N}^+))
\implies \gm\ge \gam_0
\end{equation}
to verify that the conjectured $\gm$ is indeed the optimal \scb. 

\begin{rem}
For $2\le k\le 3$, it turns out that the dominant root of the characteristic polynomial 
$\Pp_k(\cdot,\gamma)$ in \eqref{Ppkcharpoly} is positive real for $\gamma=\gmk$, so in these cases 
 a result similar to Lemma \ref{upperestlemma2} is not applicable.
\end{rem}

\subsection{Proofs for the EBDF methods}\label{EBDFmethodssection}

The coefficients for the EBDF methods are listed, for example, in \cite{ruuthhundsdorfer2005}.

\subsubsection{The EBDF3 method}

For this method, the recursion \eqref{taundef} takes the form
\begin{equation}\label{EBDF3tau}
11 \tau_n  -18 \tau_{n-1}+9 \tau_{n-2}-2 \tau_{n-3}=0\quad (n\ge 4)
\end{equation}
with
\[
\tau_1={18}/{11},\quad \tau_2={126}/{121},\quad \tau_3={1212}/{1331}.
\]
We have $\tau_0=0$ and $n_0=1$, hence it is enough to prove 
$\tau_n>0$ for all $n\ge 1$.
One root of the characteristic polynomial corresponding to \eqref{EBDF3tau} is $1$, 
so we get the representation
\[
\tau_n=1+\left(\frac{7}{22}+\frac{i \sqrt{39}}{22}\right)^n+\left(\frac{7}{22}-\frac{i
   \sqrt{39}}{22}\right)^n \quad (n\ge 1).
\]
But for $n\ge 1$ we have
\[
\left|\frac{7}{22}+\frac{i \sqrt{39}}{22}\right|^n+
\left|\frac{7}{22}-\frac{i \sqrt{39}}{22}\right|^n=2\cdot \left(\frac{2}{11}\right)^{{n}/{2}}\le 9/10,
\]
and the positivity of $\tau_n>0$ follows.

\subsubsection{The EBDF4 method}

The recursion \eqref{taundef} now reads 
\[
25 \tau_n-48 \tau_{n-1}+36 \tau_{n-2}-16\tau_{n-3}+3\tau_{n-4}=0\quad (n\ge 5)
\]
with
\[
\tau_1={48}/{25},\quad \tau_2={504}/{625},\quad \tau_3={10992}/{15625},\quad
\tau_4={366516}/{390625}.
\]
We again have $\tau_0=0$ and $n_0=1$. The explicit form of the sequence is
\[
\tau_n=1+\sum_{j=1}^3 \varrho_{j}^n \quad (n\ge 1),
\]
where $\vr_1\in\mathbb{R}$ and $\vr_{2,3}\in\mathbb{C}\setminus\mathbb{R}$
are the three roots of the cubic polynomial $\{25, -23, 13, -3\}$.
This time we have for $n\ge 3$ that
\[
\sum_{j=1}^3 |\varrho_{j}|^n\le 3\cdot (3/5)^{n}\le 9/10,
\]
proving $\tau_n>0$ for $n\ge 1$.

\subsubsection{The EBDF5 method}

For this method, the recursion \eqref{taundef} is
\[
137 \tau_n-300 \tau_{n-1}+300\tau_{n-2}-200\tau_{n-3}+75\tau_{n-4}-12 \tau _{n-5}=0\quad (n\ge 6)
\]
with
\[
\tau_1={300}/{137},\quad \tau_2={7800}/{18769},\quad
\tau_3={1271400}/{2571353},
\]
\[
\tau_4={415574100}/{352275361},\quad
\tau_5={64978409160}/{48261724457}.
\]
We have $\tau_0=0$ and $n_0=1$. The explicit form of the sequence is
\[
\tau_n=1+\sum_{j=1}^4 \varrho_{j}^n \quad (n\ge 1),
\]
where $\vr_{1,2,3,4}\in\mathbb{C}\setminus\mathbb{R}$
are the four roots of the polynomial $\{137, -163, 137, -63, 12\}$.
But $|\vr_{1,2,3,4}|\le 71/100$, so for $n\ge 5$ we have
\[
\sum_{j=1}^4 |\varrho_{j}|^n\le 4\cdot \left({71}/{100}\right)^n\le 9/10,
\]
proving $\tau_n>0$ for $n\ge 1$.

\subsection{Proofs for the BDF methods}\label{BDFmethodssection}

The coefficients for the BDF methods are listed, for example, in \cite{hairerwanner}.

\subsubsection{The BDF1 method}

We include this method here for the sake of completeness. 
The recursion \eqref{genmunrec} now has the form
\[
(\gamma +1) \mu_n(\gam)-\mu_{n-1}(\gam)=0\quad\quad (n\ge 1)
\]
with
\[
\mu_0(\gam)=\frac{1}{\gam+1}.
\]
The explicit solution is $\mu_n(\gam)=1/(\gam+1)^{n+1}>0$, so,
due to Theorem \ref{thm1.1}, we have that $\gam$ is a $\scb$ for any $\gam>0$.

\subsubsection{The BDF2 method}

The recursion \eqref{genmunrec} takes the form
\[
(2 \gamma +3) \mu_n(\gam)-4 \mu_{n-1}(\gam)+\mu_{n-2}(\gam)=0\quad\quad (n\ge 2)
\]
with
\[
\mu_0(\gam)=\frac{2}{2 \gamma +3},\quad \mu_1(\gam)=\frac{8}{(2 \gamma +3)^2}.
\]
Its characteristic polynomial $\Pp_2(\cdot,\gam)$ is quadratic for $\gam>0$. 
This polynomial has
\begin{itemize}
\item two distinct real roots for $0<\gam<1/2$;
\item a double real root for $\gam=1/2$; 
\item a pair of complex conjugate roots for $\gam>1/2$.
\end{itemize}
For any fixed $\gam>1/2$ we thus have
\[
\mu_n(\gam)=|\vr_1(\gam)|^n\left[c_{1}(\gam)\left(\frac{\vr_1(\gam)}{|\vr_1(\gam)|}\right)^n+\overline{c_{1}(\gam)}\left(\frac{\overline{\vr_1(\gam)}}{|\vr_1(\gam)|}\right)^n\right]
\]
with a suitable $c_{1}(\gam)\in\mathbb{C}\setminus\{0\}$ and 
$\vr_1(\gam)\in\mathbb{C}\setminus\mathbb{R}$. Due to Lemma \ref{upperestlemma2}
with $\nu_n\equiv 0$, the expression in $[\ldots]$ is negative for infinitely many $n$.
Hence, by Theorem \ref{thm1.1}, $1/2+\varepsilon$ is not a $\scb$ for any $\varepsilon>0$,
implying $\gmtw<1/2+\varepsilon$.

Conversely, by verifying $\mu_n(1/2)=2^{-n-1} (n+1)\ge 0$ for all $n\in\mathbb{N}$
and taking into account \eqref{sect32intro}, we see that $\gmtw\ge 1/2$, so the proof is complete.

\subsubsection{The BDF3 method}

The recursion \eqref{genmunrec} is
\[
(6 \gamma +11) \mu_n(\gam)-18\mu_{n-1}(\gam)+9 \mu_{n-2}(\gam)-2 \mu_{n-3}(\gam)=0\quad\quad (n\ge 3)
\]
with
\[
\mu_0(\gam)=\frac{6}{6 \gamma +11},\quad \mu_1(\gam)=\frac{108}{(6 \gamma +11)^2},
\quad \mu_2(\gam)=\frac{54 (-6 \gamma+25)}{(6 \gamma +11)^3}.
\]

Let us consider the term
\[
\mu_6(\gam)=\frac{6 \left(5184 \gamma ^4-539352 \gamma ^3+4277340 \gamma ^2-7093698 \gamma
   +3248425\right)}{(6 \gamma +11)^7}.
\]
The polynomial $\{5184, -539352, 4277340, -7093698, 3248425\}$ 
in the numerator has 4 real roots; 
let $\gam^*\approx 0.831264$ denote its smallest root (the other 3 zeros are located at 
$\approx 1.22747$, $\approx 6.42689$ and $\approx 95.556$). Then, due to Lemma \ref{upperestlemma1}, we have $\gmth\le\gam^*$.

To complete the proof, we show that $\mu_n(\gam^*)\ge 0\  (\forall\,n\in\mathbb{N})$, meaning
that $\gmth\ge\gam^*$ by \eqref{sect32intro}.
Indeed, for $\gam=\gam^*$, the explicit form of the recursion is
\[
\mu_n(\gam^*)=c_1 \varrho_{1}^n+c_2 \varrho_{2}^n+\overline{c_2} (\overline{\varrho_{2}})^n
\quad\quad (n\ge 0),
\]
where 
\begin{itemize}
\item $\vr_1\approx 0.500518$ is the largest real root of the polynomial 
\begin{itemize}
\item[] $P_{\text{BDF31}}:=\{34012224, -85030560, 108650160, -91171656, 55033668, $
\item[] $-25076142, 8777889, -2366334, 486000, -75816, 10080, -1152, 64\}$;
\end{itemize}
\item $\vr_2\approx 0.312678 + 0.390087 i$ is the root of $P_{\text{BDF31}}$ with the largest
real part; 
\item $c_1\approx 0.50155509$ is the largest real root of the polynomial
\begin{itemize}
\item[] $P_{\text{BDF32}}:=\{91221089034315373632, -76017574195262811360,$
\item[] $26664298621295150160, -9975778735584785400, 2799915334883820972,$ 
\item[] $-498764709912473586, 93247136355378087, -8606361446997984,$
\item[] $425210419226880, -10041822761472, 76685377536, -237993984, 262144\}$;
\end{itemize}
\item $c_2\approx -0.0631319 - 0.270418 i$ is the root of $P_{\text{BDF32}}$ with the
smallest real part.
\end{itemize}
\begin{rem}\label{rem3612degree}
The 12th-degree algebraic numbers $\vr_{1,2}$ are of course roots of the cubic characteristic polynomial \eqref{Ppkcharpoly}, with $\gam$ replaced by the 4th-degree algebraic number $\gam^*$; 
that is, $\Pp_3(\vr_{1,2},\gam^*)=0$. 
\end{rem}
\begin{rem}
Notice that $|\vr_2|\approx 0.499935$ is relatively close 
to $|\vr_1|\approx 0.500518$. This results in an increased computational cost needed to finish 
the proof (cf. Remark \ref{remark32}).
\end{rem}

Now, clearly, $\mu_n(\gam^*)=\vr_1^n\left[c_1+c_2 \left({\varrho_{2}}/{\varrho_{1}}\right)^n+\overline{c_2} \left({\overline{\varrho_{2}}}/{\varrho_{1}}\right)^n\right]$, and we have
\[
\left| c_2 \left(\frac{\varrho_{2}}{\varrho_{1}}\right)^n+\overline{c_2} \left(\frac{\overline{\varrho_{2}}}{\varrho_{1}}\right)^n\right|\le 2 |c_2| \left|\frac{\varrho_{2}}{\varrho_{1}}\right|^n<
2\cdot\frac{2777}{10000} \left(\frac{9989}{10000}\right)^n.
\]
On the other hand,
\[
2\cdot\frac{2777}{10000} \left(\frac{9989}{10000}\right)^n<\frac{50155}{100\,000}<c_1
\]
for $n\ge 93$, therefore $\mu_n(\gam^*)>0$ for $n\ge 93$. 

Finally, one checks that $\mu_n(\gam^*)>0$ for $n\in\{0, 1, \ldots, 92\}\setminus\{6\}$
(recall that $\mu_6(\gam^*)=0$), so the proof is complete.
\begin{rem}
We have $\mu_{92}(\gam^*)\approx 1.585176\cdot 10^{-28}$.
\end{rem}

\subsubsection{The BDF4 method}\label{BDF4subsubsection}

The recursion \eqref{genmunrec} is
\[
(12 \gamma +25) \mu_n(\gam)-48 \mu_{n-1}(\gam)+36 \mu_{n-2}(\gam)-16 \mu_{n-3}(\gam)+
3 \mu_{n-4}(\gam)=0
\quad\quad (n\ge 4)
\]
with
\[
\mu_0(\gam)=\frac{12}{12 \gamma +25},\quad \mu_1(\gam)=\frac{576}{(12 \gamma +25)^2},
\quad \mu_2(\gam)=\frac{1296 (-4 \gamma+13 )}{(12 \gamma +25)^3},
\]
\[
\mu_3(\gam)=\frac{192 \left(144 \gamma ^2-1992 \gamma +2137\right)}{(12 \gamma +25)^4}.
\]

For $\gam>0$, the characteristic polynomial of the recursion, $\Pp_4(\cdot,\gam)$, 
has multiple roots if and only if $\gam=7/12\approx 0.5833$. In the rest of the proof,
it will be sufficient to focus on the interval $0<\gam<7/12$.

For any $0<\gam<7/12$, let us denote the four distinct roots of $\Pp_4(\cdot,\gam)$
by $\vr_{1,2,3,4}(\gam)$. Then $0<\vr_2(\gam)<\vr_1(\gam)<1$ and 
$\vr_{3,4}(\gam)\in\mathbb{C}\setminus\mathbb{R}$.
Let us denote by 
\begin{equation}\label{gamstar048622def}
\gam^*\approx 0.48622
\end{equation} 
the 5th-degree algebraic number
listed in the row of  $\gmfo$ in
Theorem \ref{thm2.2}. By separating the real and imaginary parts of
$\Pp_4(x+i y,\gam)$, then setting up and solving the
appropriate system of polynomial equations over the reals, we can prove that 
\begin{itemize}
\item $|\vr_{3}(\gam)|=|\vr_{4}(\gam)|<\vr_1(\gam)$ for $0<\gam<\gam^*$;
\item $|\vr_3(\gam^*)|=|\vr_4(\gam^*)|=\vr_1(\gam^*)$ for $\gam=\gam^*$;
\item $\vr_1(\gam)<|\vr_3(\gam)|=|\vr_4(\gam)|$ for $\gam^*<\gam<7/12$.
\end{itemize}
In other words, the positive real root $\vr_1(\gam)$ is no longer dominant for $\gam>\gam^*$.

First we prove $\gmfo\ge \gamma^*$ by proving $\mu_n(\gam^*)>0$ ($\forall\,n\in\mathbb{N}$), see \eqref{sect32intro}. For $\gam=\gam^*$ we have the representation
\[
\mu_n(\gam^*)=c_1(\gam^*) \left(\vr_1(\gam^*)\right)^n+c_2(\gam^*) \left(\vr_2(\gam^*)\right)^n+
c_3(\gam^*) \left(\vr_3(\gam^*)\right)^n+\overline{c_3(\gam^*)} 
\left(\overline{\vr_3(\gam^*)}\right)^n \quad (n\ge 0), 
\]
where 
\begin{itemize}
\item $\vr_1(\gam^*)\approx 0.605651$ is the unique real root of the polynomial 
\begin{itemize}
\item[] $\{96, -144, 86, -30, 9, -2\}$;
\end{itemize}
\item $\vr_2(\gam^*)\approx 0.437941$ is the unique real root of the polynomial 
\begin{itemize}
\item[] $\{7080, -8928, 6410, -2826, 621, -54\}$;
\end{itemize}
\item $\vr_3(\gam^*)\approx 0.25655 + 0.54863 i$ is the root of the polynomial 
\begin{itemize}
\item[] $\{82944, -140544, 160624, -112944, 60516, -27800, 12636, -5832, 1969, 
-384, 36\}$
\end{itemize}
having the property $|\vr_3(\gam^*)|=\vr_1(\gam^*)$;
\item $c_1(\gam^*)\approx 1.21912$ is the unique real root of the polynomial 
\begin{itemize}
\item[] $\{638976, -1308672, 767680, -148848, 255, -16\}$;
\end{itemize}
\item $c_2(\gam^*)\approx -0.583734$ is the unique real root of the polynomial 
\begin{itemize}
\item[] $\{15582086307840, 11032756568064, 1374924543424, 141329286000,$ 
\item[] $\ -715299903, 8503056\}$;
\end{itemize}
\item $c_3(\gam^*)\approx -0.123106 - 0.169757 i$ is the root of the polynomial 
\begin{itemize}
\item[] $\{654252399875063808, 147972616215330816, 117436085430648832,$ 
\item[] $\ 23378947275620352, 6522272391303168, 504776558675968, 75411131715456,$ 
\item[] $\ -3364763918784, 58367021905, -452933856, 1679616\}$
\end{itemize}
having the smallest real part.
\end{itemize}
\begin{rem}
Here again we have converted polynomials whose coefficients are algebraic numbers to
higher-degree polynomials with integer coefficients (cf. Remark \ref{rem3612degree}).
\end{rem}

By rewriting $\mu_n(\gam^*)$ ($n\in\mathbb{N}$) as
\[
\left(\vr_1(\gam^*)\right)^n\left[c_1(\gam^*) +c_2(\gam^*) \left(\frac{\vr_2(\gam^*)}{\vr_1(\gam^*)}\right)^n+
c_3(\gam^*) \left(\frac{\vr_3(\gam^*)}{\vr_1(\gam^*)}\right)^n+\overline{c_3(\gam^*)} 
\left(\frac{\overline{\vr_3(\gam^*)}}{\vr_1(\gam^*)}\right)^n\right], 
\]
and noticing that
\[
\left| c_2(\gam^*) \left(\frac{\vr_2(\gam^*)}{\vr_1(\gam^*)}\right)^n+
c_3(\gam^*) \left(\frac{\vr_3(\gam^*)}{\vr_1(\gam^*)}\right)^n+\overline{c_3(\gam^*)} 
\left(\frac{\overline{\vr_3(\gam^*)}}{\vr_1(\gam^*)}\right)^n\right|\le
\]
\[
|c_2(\gam^*)| \left|\frac{\vr_2(\gam^*)}{\vr_1(\gam^*)}\right|^n+
2|c_3(\gam^*)| \left|\frac{\vr_3(\gam^*)}{\vr_1(\gam^*)}\right|^n=
|c_2(\gam^*)| \left|\frac{\vr_2(\gam^*)}{\vr_1(\gam^*)}\right|^n+
2|c_3(\gam^*)|< 
\]
\[
|c_2(\gam^*)| + 2|c_3(\gam^*)|< 11/10 <|c_1(\gam^*)|,
\]
we see that $\mu_n(\gam^*)>0$ for all $n\in\mathbb{N}$. Thus we have proved $\gmfo\ge \gamma^*$.

To prove the converse inequality, $\gmfo\le \gamma^*$, we apply Lemma \ref{upperestlemma2}. 
We set $\gam:=\gam^*+\varepsilon$ with some sufficiently small, but arbitrary $\varepsilon>0$. Then for $n\in\mathbb{N}$
we have 
\[
\mu_n(\gam)=\left|\vr_3(\gam)\right|^n\left( \nu_n+
w z^n+\bar{w} (\bar{z})^n\right)
\]
with $z:={\vr_3(\gam)}/{|\vr_3(\gam)|}$, $w:=c_3(\gam)$ and 
\[
\nu_n:=c_1(\gam) \left(\frac{\vr_1(\gam)}{|\vr_3(\gam)|}\right)^n+c_2(\gam) \left(\frac{\vr_2(\gam)}{|\vr_3(\gam)|}\right)^n.
\]
Due to the properties of the numbers $\vr_j(\gam)$ listed in the paragraph of \eqref{gamstar048622def}, we know that $\mathbb{R}\ni \nu_n\to 0$ as $n\to+\infty$.
Moreover, since the functions $\vr_3(\cdot)$ and $c_3(\cdot)$ are continuous (also) at $\gam^*$, 
we have $z\in\mathbb{C}\setminus\mathbb{R}$, $|z|=1$ and $w\in\mathbb{C}\setminus\{0\}$, 
for $\varepsilon>0$ small enough.
Lemma \ref{upperestlemma2} then shows that $\mu_n(\gam)$ cannot be non-negative
for all $n\in\mathbb{N}$, so by Theorem \ref{thm1.1} we obtain that 
$\gmfo< \gamma^*+
\varepsilon$.

\subsubsection{The BDF5 method}

The recursion \eqref{genmunrec} is
\[
(60 \gamma +137) \mu_n(\gam)-300 \mu_{n-1}(\gam)+300 \mu_{n-2}(\gam)
-200 \mu_{n-3}(\gam)+
\]
\[
75 \mu_{n-4}(\gam)-12 \mu_{n-5}(\gam)=0\quad\quad (n\ge 5)
\]
with
\[
\mu_0(\gam)=\frac{60}{60 \gamma +137},\quad \mu_1(\gam)=\frac{18000}{(60 \gamma +137)^2},
\quad \mu_2(\gam)=\frac{18000 (-60 \gamma+163 )}{(60 \gamma +137)^3},
\]
\[
\mu_3(\gam)=\frac{12000 \left(3600 \gamma ^2-37560 \gamma +30469\right)}{(60 \gamma +137)^4},
\]
\[
\mu_4(\gam)=\frac{4500 \left(-216000 \gamma ^3+8600400 \gamma ^2-22146420 \gamma
   +10021847\right)}{(60 \gamma +137)^5};
\]
see Figure \ref{BDF5fig}.

The characteristic polynomial of the recursion $\Pp_5(\cdot,\gam)$ has no multiple roots
for $\gam>0$. We denote the five distinct roots of $\Pp_5(\cdot,\gam)$
by $\vr_{1,2,3,4,5}(\gam)$ and let 
\[
\gam^*\approx 0.30421
\]
denote the 10th-degree algebraic number
listed in the row of  $\gmfi$ in
Theorem \ref{thm2.2}. Then for any $\gam\in(0,1)$ we can prove that
\begin{itemize}
\item $0<\vr_1(\gam)<1$ and $\vr_{2,3,4,5}(\gam)\in\mathbb{C}\setminus\mathbb{R}$;
\item $|\vr_{2,3,4,5}(\gam)|<\vr_1(\gam)$ for $0<\gam<\gam^*$;
\item $|\vr_{4,5}(\gam^*)|<|\vr_{2,3}(\gam^*)|=\vr_1(\gam^*)$ for $\gam=\gam^*$;
\item $|\vr_{1,4,5}(\gam)|<|\vr_2(\gam)|=|\vr_3(\gam)|$ for $\gam^*<\gam<1$.
\end{itemize}
\begin{figure}
\centerline{\includegraphics[width=.75\textwidth]{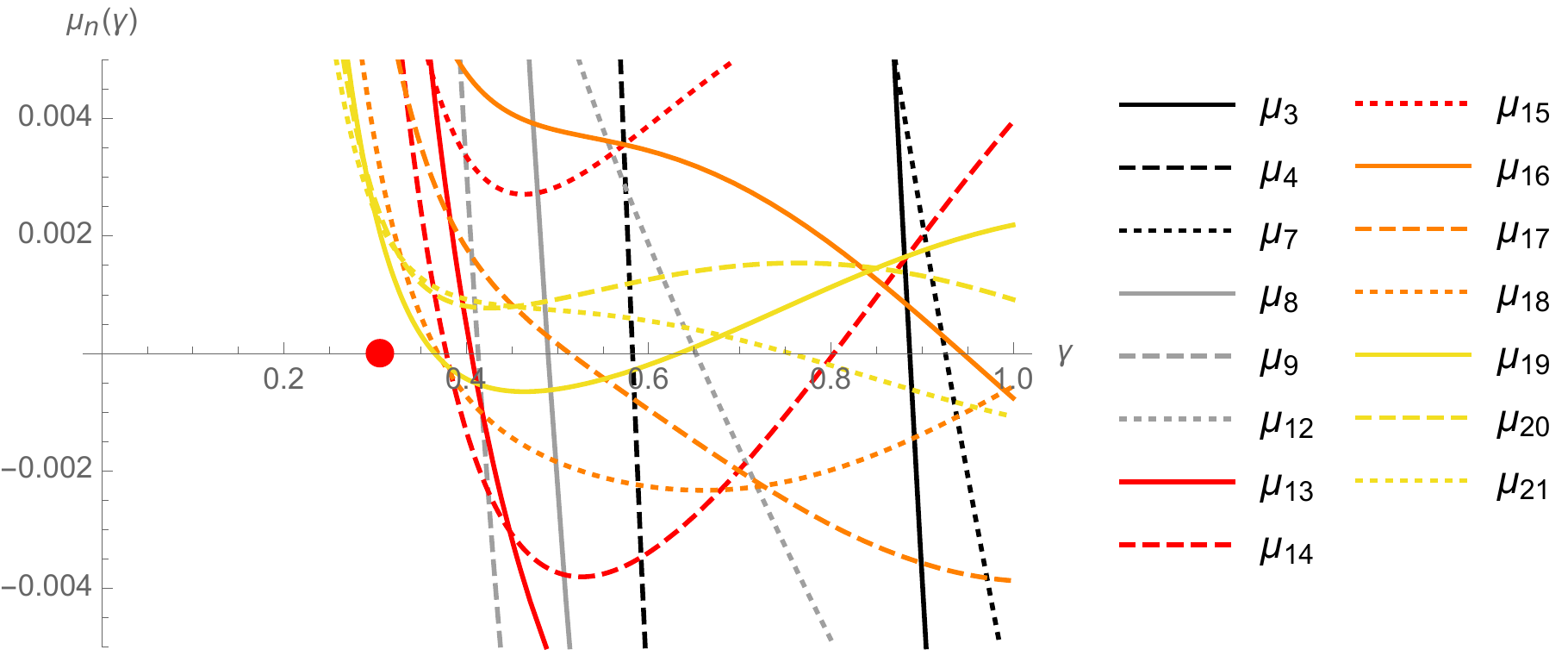}}
\caption{The functions $\gam\mapsto\mu_n(\gam)$ for $1\le n\le 21$ corresponding to the BDF5 method
are shown (the curves with indices $n\in\{1,2,5,6,10,11\}$ are not visible in this plot window). The
red dot is placed at $\gam=\gmfi\approx 0.30421$.}
\label{BDF5fig}      
\end{figure}
For $\gam=\gam^*$ and $n\ge 0$ we have 
\[
\mu_n(\gam^*)=\varrho_{1}^n\left[ c_1 +c_2 \left(\frac{\varrho_{2}}{\vr_1}\right)^n+\overline{c_2} \left(\frac{\overline{\varrho_{2}}}{\vr_1}\right)^n+c_4 \left(\frac{\varrho_{4}}{\vr_1}\right)^n+\overline{c_4} \left(\frac{\overline{\varrho_{4}}}{\vr_1}\right)^n\right],
\]
where, for brevity, now we omit the explicit form of the algebraic numbers $c_j$ and $\vr_j$,
and only give their approximate values:
\begin{itemize}
\item $\vr_1\approx 0.737893$,\quad $\vr_2\approx 0.195442 + 0.711539 i$,\quad
$\vr_4\approx 0.401777 + 0.175943 i$,
\item $c_1\approx 0.994377$,\quad $c_2\approx -0.117157 - 0.126015 i$,\quad 
$c_4\approx -0.186798 - 0.0841337 i$.
\end{itemize}
From this we get that
\[
\left| c_2 \left(\frac{\varrho_{2}}{\vr_1}\right)^n+\overline{c_2} \left(\frac{\overline{\varrho_{2}}}{\vr_1}\right)^n+c_4 \left(\frac{\varrho_{4}}{\vr_1}\right)^n+\overline{c_4} \left(\frac{\overline{\varrho_{4}}}{\vr_1}\right)^n\right|\le 2|c_2|\cdot 1^n+ 2|c_4|\cdot 
1^n< \frac{8}{10}<c_1,
\]
so $\mu_n(\gam^*)>0$ ($n\in\mathbb{N}$), and hence $\gmfi\ge \gamma^*$ by
\eqref{sect32intro}.

The proof of the converse inequality, $\gmfi\le \gamma^*$, is again based on Lemma \ref{upperestlemma2}, and is completely analogous to the 
one presented in Section \ref{BDF4subsubsection}.

\subsubsection{The BDF6 method}

The recursion \eqref{genmunrec} is
\[
3 (20 \gamma +49) \mu_n(\gam)-360 \mu_{n-1}(\gam)
+450\mu_{n-2}(\gam)-400\mu_{n-3}(\gam)+
225\mu_{n-4}(\gam)-
\]
\[
72\mu_{n-5}(\gam)+10\mu_{n-6}(\gam)=0 \quad\quad (n\ge 6)
\]
with
\[
\mu_0(\gam)=\frac{20}{20 \gamma +49},\quad \mu_1(\gam)=\frac{2400}{(20 \gamma +49)^2},
\]
\[
\mu_2(\gam)=\frac{3000 (-20 \gamma+47 )}{(20 \gamma +49)^3},\quad \mu_3(\gam)=\frac{8000 \left(400 \gamma ^2-3440 \gamma +2131\right)}{3 (20 \gamma +49)^4},
\]
\[
\mu_4(\gam)=\frac{500 \left(-24000 \gamma ^3+695600 \gamma ^2-1343380 \gamma +474833\right)}{(20\gamma +49)^5},
\]
\[
\mu_5(\gam)=\frac{160 \left(480000 \gamma ^4-53296000 \gamma ^3+283987200 \gamma ^2-212499240 \gamma
   +84071653\right)}{(20 \gamma +49)^6}.
\]

Let us consider any $0\le \gam<37/60\approx 0.6167$. Then one checks by using 
the discriminant that the 6 roots of $\Pp_6(\cdot,\gam)=0$ are distinct.
\begin{figure}
\centerline{\includegraphics[width=.6\textwidth]{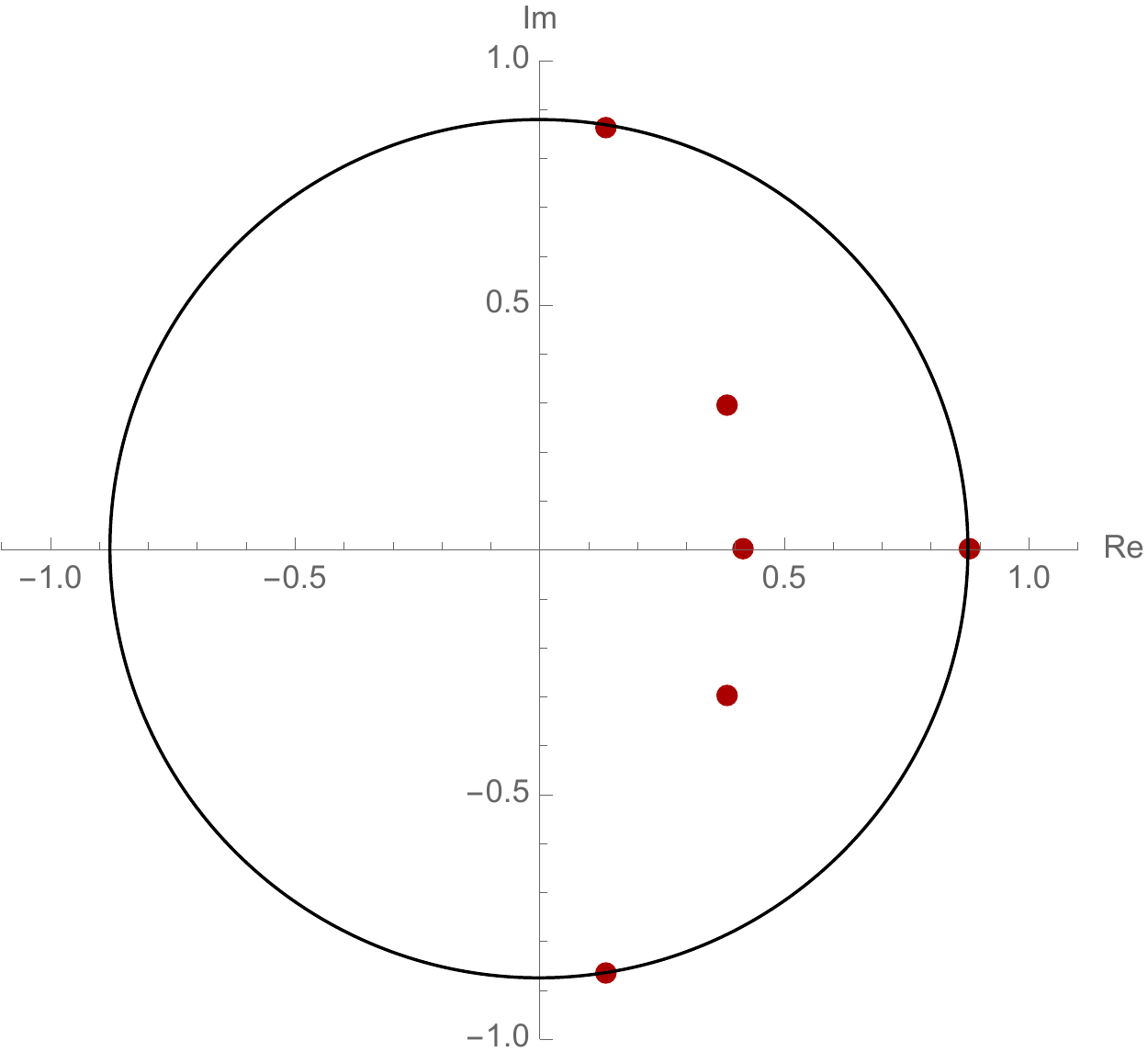}}
\caption{The roots of $\Pp_6(\cdot,\gmsi)$ corresponding to the BDF6 case.}
\label{BDF6fig}      
\end{figure}
Let 
\[
\gam^*\approx 0.13136
\]
denote the 18th-degree algebraic number
listed in the row of  $\gmsi$ in Theorem \ref{thm2.2}. This constant has been obtained after some non-trivial computation and simplification.
The roots $\vr_j(\gam)$ ($1\le j\le 6$) are distributed as follows:
\begin{itemize}
\item $0<\vr_2(\gam)<\vr_1(\gam)<1$ and $\vr_{3,4,5,6}(\gam)\in\mathbb{C}\setminus\mathbb{R}$;
\item $|\vr_{2,3,4,5,6}(\gam)|<\vr_1(\gam)$ for $0<\gam<\gam^*$;
\item $|\vr_{2,5,6}(\gam^*)|<|\vr_{3,4}(\gam^*)|=\vr_1(\gam^*)$ for $\gam=\gam^*$;
\item $|\vr_{1,2,5,6}(\gam)|<|\vr_3(\gam)|=|\vr_4(\gam)|$ for $\gam^*<\gam<37/60$.
\end{itemize}
For $\gam=\gam^*$ and $n\ge 0$, one has the representation 
\[
\mu_n(\gam^*)=\varrho_{1}^n\left[ c_1 +c_2 \left(\frac{\varrho_{2}}{\vr_1}\right)^n
+c_3 \left(\frac{\varrho_{3}}{\vr_1}\right)^n+\overline{c_3} \left(\frac{\overline{\varrho_{3}}}{\vr_1}\right)^n+c_5 \left(\frac{\varrho_{5}}{\vr_1}\right)^n+\overline{c_5} \left(\frac{\overline{\varrho_{5}}}{\vr_1}\right)^n\right],
\]
where the algebraic numbers $c_j$, $\vr_j$ have the approximate values
\begin{itemize}
\item $\vr_1\approx 0.87690236$,\quad $\vr_2\approx 0.41284041$,
\item $\vr_3\approx  0.13673253 + 0.86617664 i$,\quad $\vr_5\approx 0.38057439 + 0.29512217 i$,
\item $c_1\approx 1.0000077$,\quad $c_2\approx -0.13742979$,
\item $c_3\approx -0.11295491 - 0.10160183 i$,\quad  
$c_5\approx -0.124637633 - 0.050848744 i$,
\end{itemize}
see Figure \ref{BDF6fig}. 
\begin{figure}
\centerline{\includegraphics[width=.7\textwidth]{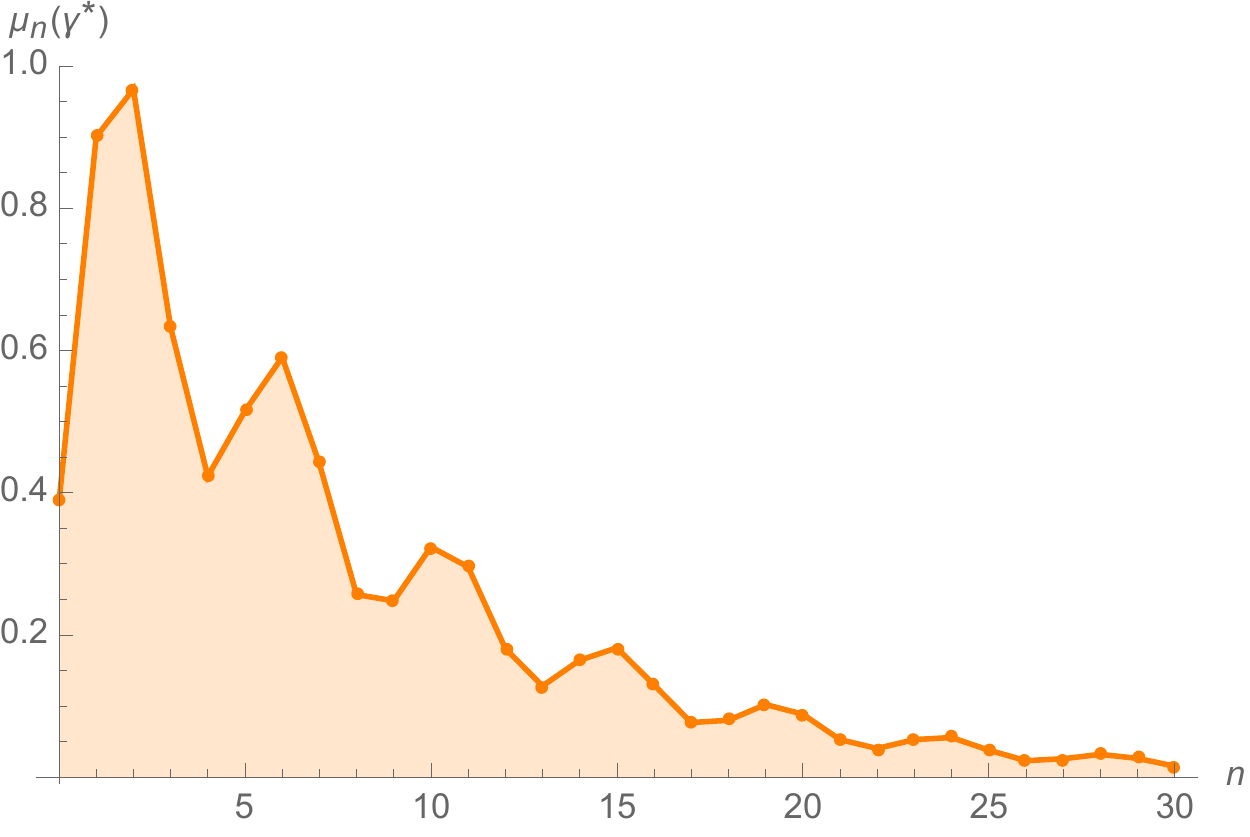}}
\caption{The sequence $\mu_n(\gam^*)$ 
 corresponding to the BDF6 method
is depicted (using linear interpolation).}
\label{BDF6mun}      
\end{figure}
For any $n\ge 0$, the estimate
\[
\left|c_2 \left(\frac{\varrho_{2}}{\vr_1}\right)^n
+c_3 \left(\frac{\varrho_{3}}{\vr_1}\right)^n+\overline{c_3} \left(\frac{\overline{\varrho_{3}}}{\vr_1}\right)^n+c_5 \left(\frac{\varrho_{5}}{\vr_1}\right)^n+\overline{c_5} \left(\frac{\overline{\varrho_{5}}}{\vr_1}\right)^n\right|\le
\]
\[
|c_2|+2|c_3|+2|c_5|<\frac{8}{10}<c_1
\]
yields $\mu_n(\gam^*)>0$, see Figure \ref{BDF6mun}. This proves that $\gmsi\ge \gamma^*$ by 
\eqref{sect32intro}.

As before, a final application of Lemma \ref{upperestlemma2} shows that $\gmsi\le \gam^*$,
 completing the proof.

\subsection{Proofs for the Adams--Bashforth methods}\label{ABmethodssection}

The coefficients for the Adams--Bashforth methods are listed, for example, in \cite{hairerwanner1}.

\subsubsection{The AB1 method}

It is easily seen that the recursion \eqref{mundef} now has the form
\[
\mu_n(\gam)=(1-\gam) \mu_{n-1}(\gam)\quad\quad (n\ge 2)
\]
with $\mu_1(\gam)=1$, so any $\gam>1$ violates the non-negativity of $\mu_n(\gam)$
in \eqref{necsufcond2}. Hence $\gmo\le 1$. But $\mu_n(1)\ge 0$ for all $n\in\mathbb{N}$,
and it is known \cite{hairerwanner} that $-1\in \mathrm{int}(\Ss)$, so
\eqref{17mod} finishes the proof.

\subsubsection{The AB2 method}

For this method, the recursion \eqref{mundef} is
\[
\mu_n(\gam)-\left(1-\frac{3 \gamma }{2}\right) \mu_{n-1}(\gam)-\frac{\gam}{2}\mu_{n-2}(\gam)=0
\quad\quad (n\ge 3)
\]
with $\mu_1(\gam)=3/2$ and $\mu_2(\gam)=-9\gam/4+1$. Lemma \ref{upperestlemma1}
with $n=2$ and $\gam^*=4/9$ shows that $\gmtw\le 4/9$. On the other hand,
\[
\mu_n(4/9)=3^{1-n} \left(2^n-4 (-1)^n\right)/4\ge 0\quad\quad (n\ge 1),
\]
and $-4/9\in \mathrm{int}(\Ss)$ (see \cite{hairerwanner}), so the proof is complete due to
\eqref{17mod}.

\subsubsection{The AB3 method}

For this method, the recursion \eqref{mundef} takes the form
\[
\mu_n(\gam)-\left(1-\frac{23 \gam}{12}\right) \mu_{n-1}(\gam)
-\frac{4}{3} \gam  \mu_{n-2}(\gam)+\frac{5}{12} \gam  \mu_{n-3}(\gam)=0\quad\quad (n\ge 4)
\]
with
\[
\mu_1(\gam)=\frac{23}{12},\quad \mu_2(\gam)=-\frac{529 \gamma }{144}+\frac{7}{12},\quad \mu_3(\gam)=\frac{12167 \gamma ^2}{1728}-\frac{161 \gamma }{72}+1.
\]
Lemma \ref{upperestlemma1}
with $n=2$ and $\gam^*={84}/{529}$ shows that $\gmth\le {84}/{529}$. 
We also know \cite{hairerwanner} that $-{84}/{529}\in \mathrm{int}(\Ss)$, so
by \eqref{17mod} it is enough to verify that $\mu_n({84}/{529})\ge 0$ for all $n\ge 1$.

For $n\ge 1$ we have
\[\mu_n({84}/{529})=\sum_{j=1}^3 c_j \varrho_{j}^n=\varrho_{3}^n \left(c_3+
c_1\left(\frac{\varrho_1}{\varrho_3}\right)^n+c_2\left(\frac{\varrho_2}{\varrho_3}\right)^n\right),\]
where the numbers $\varrho_j$ ($\varrho_1<0<\varrho_2<\varrho_3$, $|\varrho_{1}|<\varrho_3/2$, 
$\varrho_{2}<\varrho_3/4$) and 
$c_j$ ($-5<c_1<-3<c_2<0<1<c_3$)
are the three roots of the polynomials
$\{529, -368, -112, 35\}$ and 
\[
\{30733417008, 193547352348, 162435667337, -391554926405\},
\]
respectively. Since 
\[
\left|c_1\left(\frac{\varrho_1}{\varrho_3}\right)^n+c_2\left(\frac{\varrho_2}{\varrho_3}\right)^n\right|
\le 5 \left|\frac{\varrho_1}{\varrho_3}\right|^n+3 \left|\frac{\varrho_2}{\varrho_3}\right|^n
<\frac{5}{2^n}+\frac{3}{4^n}<1<c_3
\]
for $n\ge 3$, and $\mu_n({84}/{529})\ge 0$ for $1\le n\le 2$,
the proof is complete.

\subsubsection{The AB4 method}

The starting terms of the recursion \eqref{mundef} satisfy
\[
\mu_1(\gam)=\frac{55}{24},\quad \mu_2(\gam)=-\frac{3025 \gamma }{576}-\frac{1}{6},
\]
so the non-negativity condition in \eqref{necsufcond2} for $n=2$ is violated for any $\gam>0$,
hence $\noscb$.

\section{Conclusions}\label{conclusionssection}

The step-size coefficient for boundedness (SCB) of a linear multistep method (LMM) is a generalization of the 
 strong-stability-preserving (SSP) coefficient of the LMM. The SCB appears in conditions that ensure monotonicity or boundedness properties of the LMM, and a method is more efficient if it possesses a larger SCB. 
 
In \cite{hundsdorferspijkermozartova2012,spijker2013}, a necessary and sufficient condition
has been given for a  number $\gamma>0$ to be a SCB of a LMM. This condition involves checking the non-negativity of an auxiliary sequence $\mu_n(\gamma)$ that satisfies a linear recurrence relation in $n\in\mathbb{N}$. For fixed $n$, the function 
$\mu_n(\cdot)$ is a rational function.

The main goal of the present work is to determine the maximum SCB,  $\gm$ for a given linear multistep method.
For each $k$-step BDF method ($2\le k\le 6$) and each $k$-step Adams--Bashforth method ($1\le k\le 3$), 
we determine the exact value of  $\gm$ by finding the largest $\gamma>0$ that satisfies
$\mu_n(\gamma)\ge 0$ for all non-negative $n$. 

We have identified two types of conditions that characterize
  $\gm$ in these multistep families:\\
\indent ($i$) a positive real dominant root of the characteristic polynomial corresponding to the recursion $\mu_n(\gamma)$ loses its dominant property at $\gamma=\gm$, or\\
\indent ($ii$) there is an index $n_0\in\mathbb{N}$ such that $\gm$ is a simple root of the function $\mu_{n_0}(\cdot)$.\\
It turns out that $\gm$ is determined 
\begin{itemize}
\item by condition ($i$) for the BDF methods with $k\in\{2, 4, 5, 6\}$ steps;
\item by condition ($ii$) with $n_0=6$ for the $3$-step BDF method;
\item by condition ($ii$) with $n_0=2$ for the Adams--Bashforth methods with $k\in\{1, 2, 3\}$ steps.
\end{itemize}

\bigskip

\noindent \textbf{Acknowledgements.} The author is indebted to the anonymous referees of the manuscript for their suggestions that helped improving the presentation of the material.


\begin{thebibliography}{99}


\bibitem{tijdeman2013}
F. Beukers, R. Tijdeman, One-sided power sum and cosine inequalities, 
Indag. Math. (N.S.), Vol. 24, No. 2, 2013, 373--381,
\href{http://dx.doi.org/10.1016/j.indag.2012.11.009}{\tt{http://dx.doi.org/10.1016/j.indag.2012.11.009}}



\bibitem{kraaijevanger}
J. A. van de Griend, J. F. B. M. Kraaijevanger, Absolute monotonicity of rational functions occurring in 
the numerical solution of initial value problems, Numer. Math., Vol. 49, No. 4, 1986, 413--424,
\href{http://dx.doi.org/10.1007/BF01389539}{\tt{http://dx.doi.org/10.1007/BF01389539}}




\bibitem{hairerwanner1}
E. Hairer, S. P. N{\o}rsett, G. Wanner, Solving Ordinary Differential Equations I. Nonstiff
Problems, Springer, Berlin, 2008


\bibitem{hairerwanner}
E. Hairer, G. Wanner, Solving Ordinary Differential Equations II. Stiff
and Differential-Algebraic Problems, Springer, Berlin, 2002


\bibitem{hundsdorferruuthspiteri2003}
W. Hundsdorfer, S. J. Ruuth, R. J. Spiteri, Monotonicity-preserving linear multistep methods,
SIAM J. Numer. Anal., Vol. 41, No 2, 2003, 605--623,
\href{http://dx.doi.org/10.1137/S0036142902406326}{\tt{http://dx.doi.org/10.1137/S0036142902406326}}



\bibitem{hundsdorferruuth2006}
W. Hundsdorfer, S. J. Ruuth, On monotonicity and boundedness properties of linear multistep
methods, Math. Comp., Vol. 75, No. 254, 2006, 655--672,
\href{http://dx.doi.org/10.1090/S0025-5718-05-01794-1}{\tt{http://dx.doi.org/10.1090/S0025-5718-05-01794-1}}


\bibitem{hundsdorferspijkermozartova2011}
W. Hundsdorfer, A. Mozartova, M. N. Spijker, Special boundedness properties in numerical initial value
 problems, BIT, Vol. 51, No. 4, 2011, 909--936,
 \href{http://dx.doi.org/10.1007/s10543-011-0349-x}{\tt{http://dx.doi.org/10.1007/s10543-011-0349-x}}

\bibitem{hundsdorferspijker2011}
W. Hundsdorfer, M. N. Spijker, Boundedness and strong stability of Runge--Kutta methods,
Math. Comp., Vol. 80, No. 274, 2011, 863--886,
 \href{http://dx.doi.org/10.1090/S0025-5718-2010-02422-6}{\tt{http://dx.doi.org/10.1090/S0025-5718-2010-02422-6}}


\bibitem{hundsdorferspijkermozartova2012}
W. Hundsdorfer, A. Mozartova, M. N. Spijker, Stepsize restrictions for boundedness and monotonicity of multistep methods, J. Sci. Comput., Vol. 50, No. 2, 2012, 265--286,
\href{http://dx.doi.org/10.1007/s10915-011-9487-1}{\tt{http://dx.doi.org/10.1007/s10915-011-9487-1}}




\bibitem{locziketcheson2014}
L. L{\'o}czi, D. I. Ketcheson, Rational functions with maximal radius of absolute monotonicity,
LMS J. Comput. Math.,  Vol. 17, No. 1, 2014, 159--205, \href{http://dx.doi.org/10.1112/S1461157013000326}{\tt{http://dx.doi.org/10.1112/S1461157013000326}}

\bibitem{survey} 
J.~Ouaknine, J.~Worrell,  Decision Problems for Linear Recurrence Sequences, 
In: Reachability Problems, 6th International Workshop, RP 2012, Bordeaux, France, 
September 17--19, 2012, \href{http://dx.doi.org/10.1007/978-3-642-33512-9_3}{\texttt{http://dx.doi.org/10.1007/978-3-642-33512-9\_3}}



\bibitem{soda} 
J.~Ouaknine, J.~Worrell, Positivity problems for low-order linear recurrence sequences, 
In: Proceedings of the Twenty-Fifth Annual ACM-SIAM Symposium on Discrete Algorithms, 2014,
\href{http://dx.doi.org/10.1137/1.9781611973402.27}{\texttt{http://dx.doi.org/10.1137/1.9781611973402.27}}






\bibitem{book1} 
J.~Ouaknine, J.~Worrell,  On the positivity problem for simple linear recurrence sequences,
In:  Automata, languages, and programming. Part II, 318--329, Springer, Heidelberg, 2014 


\bibitem{book2} 
J.~Ouaknine, J.~Worrell, Ultimate positivity is decidable for simple linear recurrence sequences, In:  Automata, languages, and programming. Part II, 330--341, Springer, Heidelberg, 2014 


\bibitem{ruuthhundsdorfer2005}
S. J. Ruuth, W. Hundsdorfer, High-order linear multistep methods with general monotonicity
and boundedness properties, J. Comput. Phys., Vol. 209, No 1, 2005, 226--248,
\href{http://dx.doi.org/10.1016/j.jcp.2005.02.029}{\texttt{http://dx.doi.org/10.1016/j.jcp.2005.02.029}}


\bibitem{spijker2013}
M. N. Spijker, The existence of stepsize-coefficients for boundedness of linear multistep
methods,  Appl. Numer. Math.,  Vol. 63, 2013, 45--57, 
\href{http://dx.doi.org/10.1016/j.apnum.2012.09.005}{\texttt{http://dx.doi.org/10.1016/j.apnum.2012.09.005}}


\end{thebibliography}
\end{document}